\definecolor{darkblue}{rgb}{0,0,0.6}
\title{Long and thin covers for flow spaces}
\author{Daniel Kasprowski and Henrik R\"uping}
\address{Rheinische Friedrich-Wilhelms-Universit\"at Bonn, Mathematisches Institut,\newline\indent Endenicher Allee 60, 53115 Bonn, Germany}
\email{kasprowski@uni-bonn.de}
\email{rueping@uni-bonn.de}
\date{\today}
\newcommand{\bbR}{\mathbbm{R}}
\newcommand{\IZ}{\mathbbm{Z}}
\newcommand{\bbZ}{\mathbbm{Z}}
\newcommand{\bbN}{\mathbbm{N}}
\newcommand{\ignore}[1]{}
\newcommand{\FJCw}{{FJCw}}
\DeclareMathOperator{\ind}{ind}
\DeclareMathOperator{\axes}{axes}
\DeclareMathOperator{\cyc}{cyc}
\DeclareMathOperator{\pr}{pr}
\newcommand{\VCyc}{{\mathcal{V}cyc}}
\newcommand{\mcV}{\mathcal{V}}
\newcommand{\mcW}{\mathcal{W}}
\newcommand{\mcF}{\mathcal{F}}
\newcommand{\mcU}{\mathcal{U}}
\newcommand{\calf}{\mathcal{F}}
\newcommand{\calv}{\mathcal{V}}
\newcommand{\calu}{\mathcal{U}}
\newcommand{\cala}{\mathcal{A}}
\newcommand{\calb}{\mathcal{B}}
\newcommand{\calc}{\mathcal{C}}
\newcommand{\Fin}{{\mathcal{F}in}}
\numberwithin{equation}{section}
\newtheorem{thm}[equation]{Theorem}
\newtheorem{theorem}[equation]{Theorem}
\newtheorem{prop}[equation]{Proposition}
\newtheorem{cor}[equation]{Corollary}
\newtheorem{lemma}[equation]{Lemma}
\theoremstyle{definition}
\newtheorem{defi}[equation]{Definition}
\newtheorem{definition}[equation]{Definition}
\newtheorem{rem}[equation]{Remark}
\newtheorem{notation}[equation]{Notation}
\newtheorem{proposition}[equation]{Proposition}
\newtheoremstyle{TheoremNum}
        {}{}              
        {\itshape}                      
        {}                              
        {\bfseries}                     
        {.}                             
        { }                             
        {\thmname{#1}\thmnote{ \bfseries #3}}
\theoremstyle{TheoremNum}
\newtheorem{claim}{Claim}
\begin{document}
\begin{abstract}
Long and thin covers of flow spaces are important ingredients in the proof of the Farrell--Jones conjecture for certain classes of groups, like hyperbolic and CAT(0)-groups. In this paper we provide an alternative construction of such covers which holds in a more general setting and simplifies some of the arguments. 
\end{abstract}
\subjclass[2010]{18F25}
\keywords{Flow spaces, Long and thin covers, Farrell--Jones conjecture}
\maketitle
\section{Introduction}

In this paper $G$ will always denote a discrete and countable group. A flow space $X$ for $G$ is a metric space $X$ together with a continuous action of $G\times \bbR$, such that the action of $G=G\times 0$ on $X$ is isometric and proper. We call the action of $\bbR$ on $X$ the flow and denote the image of $x\in X$ under $t\in\bbR$ by $\Phi_tx$. See \cref{not:basics}.\ref{not:basics3}-\ref{not:basics4} for the definition of a $\VCyc$-cover.
 
\begin{theorem}[Main~Theorem]\label{thm:main}
Let $X$  be a finite-dimensional, second-countable and locally compact flow space for a group $G$ and let $\alpha,\delta$ be positive real numbers. Then there is a $\VCyc$-cover $\calu$ of $X$ of dimension at most $7\dim(X)+7$  such that for every point $x\in X$ there is an open set $U\in \calu$ with $\Phi_{[-\alpha,\alpha]}(x) \subseteq U$ and for every $U\in\calu$ there is a point $x\in X$ with $U\subseteq B_\delta(\Phi_\bbR(x))$.
\end{theorem}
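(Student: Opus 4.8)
The plan is to decompose $X$ in a $G$-equivariant way according to the behaviour of the flow on orbit segments of length comparable to a large parameter $\gamma\gg\alpha$, to build a suitable cover on each piece, and to glue them. Fix a $G$-invariant metric on $X$ (which exists by properness). First I would introduce, for a large $\gamma$ to be fixed later, the $G$-invariant set $A_\gamma=\{x\in X:\exists\,g\in G,\ t\in[0,\gamma]\text{ with }g\cdot x=\Phi_t(x)\}$, together with a slightly larger closed neighbourhood obtained by relaxing the equality to $d(g\cdot x,\Phi_t(x))<\varepsilon$. Properness makes $A_\gamma$ closed, and it collects precisely the points one wants to surround with (possibly infinite) virtually cyclic isotropy: periodic flow orbits of period at most $\gamma$ (the case $g=e$, $t>0$), flow lines invariant under an infinite-order element of $G$ translating along them by at most $\gamma$, and flow-fixed points. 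On the complement $X\setminus A_\gamma$ no $G$-translate of $x$ lies on the arc $\Phi_{[0,\gamma]}(x)$, so the flow admits, locally uniformly, embedded and $G$-separated segments of a uniform length that can be made larger than any prescribed multiple of $\alpha$ by enlarging $\gamma$; this yields a transverse-slice structure, or system of flow boxes: homeomorphisms $S\times(-N,N)\cong U_S$ onto open subsets $U_S\subseteq X$ with $\dim S\le\dim X-1$ and $N$ a definite fraction of $\gamma$.

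Next I would build a $\VCyc$-cover on a $G$-invariant open neighbourhood $W$ of $A_\gamma$. Around a point of $A_\gamma$ one chooses an open set that is essentially a tube about the relevant short periodic orbit, or about a compact fundamental domain for the translation along an invariant flow line. By properness the subgroup of $G$ preserving such a tube maps with finite kernel onto a discrete group of isometries of a circle or of $\mathbb{R}$, and is therefore virtually cyclic; after shrinking it also satisfies the separation property $g\cdot U\cap U\neq\varnothing\Rightarrow g\cdot U=U$. Second-countability, hence paracompactness, lets one pass to a locally finite such cover that meets only finitely many $G$-orbits near any point, and a Lebesgue/coloring argument then turns it into a $\VCyc$-cover of $W$ of dimension a small multiple of $\dim X+1$. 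Since each tube is built around a whole short orbit, every $x\in A_\gamma$ has its full segment $\Phi_{[-\alpha,\alpha]}(x)$ inside one of them once $\gamma$ is large relative to $\alpha$ — a thin tube about the arc if the orbit is embedded on $[-\alpha,\alpha]$, the tube about the whole orbit if it wraps.

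On the complementary region $B$, which can be taken to be a $G$-invariant open set contained in $X\setminus A_{\gamma'}$ for some $\gamma'$ slightly below $\gamma$ (so that it still carries flow boxes with $N>\alpha$ plus slack), I would use the flow-box structure directly. Choosing the slices $S$ small enough that each box embeds in $X$ with finite stabilizer, and taking $N$ larger than $\alpha$ plus the transverse diameter after shrinking, makes every length-$2\alpha$ segment through an interior point of a box lie in that box. The remaining point is dimension control: cover the transverse directions using $\dim X=d$ to get multiplicity at most $d+1$, and absorb the $\mathbb{R}$-direction with the one-dimensional long-and-thin cover of $\mathbb{R}$ by overlapping intervals (honestly one-dimensional, equivariant under translation), implemented one $G$-orbit of boxes at a time. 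This produces a $\Fin$-cover of $B$, again of dimension a small multiple of $\dim X+1$.

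Finally I would take the union of the two covers over $X=W\cup B$, verify that it is still a $\VCyc$-cover with the separation property, that it still has the long-and-thin property at every point (each $x$ lies in $W$ or in $B$, and $\gamma,\gamma'$ are arranged so that the appropriate regime supplies a chart containing $\Phi_{[-\alpha,\alpha]}(x)$), and bound its dimension. The hard part is exactly this last step together with the interface: one must make all the equivariant and coloring choices — slices, the interval cover of the $\mathbb{R}$-direction, the tubes about periodic and invariant flow lines — coherent on each $G$-orbit, control the overlap of $W$ and $B$, and cope with the degeneration of the flow-box structure near $A_\gamma$ (collapsing slices, the infimal-period function being only semicontinuous, arbitrarily long periodic orbits possibly nesting near a point — which is where paracompactness and properness substitute for global cocompactness). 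Organising each of the two covers to have dimension at most roughly $3(\dim X+1)$, the union then comes out, after this bookkeeping, at the stated bound $7\dim(X)+7$.
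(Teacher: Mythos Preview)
Your high-level decomposition---split off the points of short $G$-period and handle the complement via flow boxes---matches the paper, but both halves of your argument have genuine gaps where the paper supplies a specific mechanism.

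\textbf{The long $G$-period part.} Your plan is to cover the transverse slices with multiplicity $\dim X+1$ and then ``absorb the $\mathbb{R}$-direction'' by a one-dimensional interval cover, treating each box as a product. This does not control the dimension of the resulting cover of $X$: there is no global transverse section, and a single flow segment of length comparable to $N$ will generically meet many different boxes. Thickening a multiplicity-$(d+1)$ cover of each slice $S_i$ by $\Phi_{[-N,N]}$ gives sets whose overlaps with the thickenings coming from \emph{other} slices you have not bounded at all; ``implemented one $G$-orbit of boxes at a time'' does not help, since the number of such orbits meeting a given segment is what needs to be controlled. The paper's replacement for this step (its Lemma~5.1) is the technical heart of the argument: one builds the pieces $B_i$ inductively by \emph{cutting out} from $U_i\subseteq S_i$ the flow-images of all previously built $\overline{B_j}$, so that the short thickenings $\Phi_{[-\alpha,\alpha]}B_i$ are pairwise disjoint and the long thickenings $\Phi_{(-4\alpha,4\alpha)}B_i$ meet any point at most five times. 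The points left uncovered after one pass sit inside boundaries $\partial_{S_i}B_i$, and an inductive-dimension argument shows their intersection with the relevant flow-saturation has dimension dropped by one; iterating $\dim X+1$ times exhausts $X_{>\gamma}$. That is the source of the bound $5(\dim X+1)$, not a product estimate.

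\textbf{The short $G$-period part.} Your tubes do give virtually cyclic setwise stabilizers, but ``a Lebesgue/coloring argument'' is not enough to produce a cover of dimension a bounded multiple of $\dim X+1$; nothing in your outline bounds the nerve of the tube collection. The paper's idea here is different and essential: pass to the quotient of $X_{\le\gamma}$ by the flow (separately on the axis part $X_{\axes}$ and the periodic part $X_{cyc}$), prove that these quotients are metrizable with $\dim\le\dim X$ (using that each point has a slice mapping homeomorphically to a neighbourhood in the quotient, together with finite-to-one quotient estimates), build a low-dimensional cover downstairs, and pull it back. This yields dimension $\dim X$ on $X_{\axes}$ and $2\dim X+1$ on $X_{cyc}$, which together with the $5\dim X+5$ from the long part gives the claimed $7\dim X+7$. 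Without the quotient step you have no route to any explicit bound on the short-period side.
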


The flow defines a foliation of $X$ whose leaves are the flow lines $\Phi_\bbR(x)$ for $x\in X$. Even if $X$ is a nice topological space, the orbit space $\bbR\backslash X$ can be very wild. For example $\bbR\backslash X$ will not be Hausdorff in general. When approximating the orbit map by continuous maps $f\colon X\to V$ into a simplicial complex $V$ we can therefore not expect a whole flow line to map to a single point but only to capture arbitrary large parts of the flow. By taking maps into the nerves of the covers the main theorem produces a sequence of continuous $G$-equivariant maps $f_n\colon X\to V_n$, where $V_n$ is a simplicial complex of dimension at most $7\dim(X)+7$ whose $G$-action has virtually cyclic stabilizers. More details on the construction of the maps $f_n$ are given in \cref{sec:applications}. This gives an approximation of the orbit map $X\rightarrow \bbR \backslash X$ in the following sense. For every point $x\in X$ there is a vertex $v\in V_n$ with $\Phi_{[-n,n]}(x) \subseteq f_n^{-1}(\text{St}(v))$ and for every vertex $v\in V_n$ there exists $x\in X$ such that the preimage of the star $\text{St}(v)$ is contained in $B_\delta(\Phi_{\bbR}(x))$. Note that virtually cyclic stabilizers are the smallest stabilizers one can obtain, since the stabilizers of $V_n$ not only have to contain the finite stabilizers of $X$ but also an additional infinite cyclic subgroup coming from a possible translation along the flow.

The existence of covers as in \cref{thm:main} is a main ingredient in the proof of the Farrell--Jones conjecture for hyperbolic groups by Bartels, Reich and L\"uck \cite{bartels2008k} and CAT(0)-groups by Bartels and L\"uck \cite{bartels2012borel} and Wegner \cite{wegnercat}. Long thin cell structures, a predecessor of long and thin covers, were first constructed in \cite[Section~7]{farrell1986k}. The Farrell--Jones conjecture was first introduced in \cite[Sections~1.6~and~1.7]{farrell1993}. For this application long and thin covers do not actually have to be thin, i.e.\ the condition on the cover $\mcU$ that $U\subseteq B_\delta(\Phi_\bbR(x))$ for every $U\in\mcU$ is not needed. But they cannot be too large because of the restriction on the stabilizers. The flow space $X$ is decomposed in the part with a short $G$-period and the part without, see \cref{def:period}. So far a general construction was only given for the part without short $G$-period by Bartels, Lück and Reich in \cite{bartels2008equivariant}. Here we give an alternative construction which leads to a shorter and cleaner proof.

The cover of the part with short $G$-period was previously only constructed for special groups. We give a construction that holds for all groups and thus giving a result Arthur Bartels asked for in \cite[Remark~1.5.9]{proofs}. In \cref{sec:applications} we explain how this can be used to generalize \cite[Proposition 5.11]{flow} and obtain the following corollary. See \cref{sec:applications} for appearing notation.

\begin{cor}\label{maincor} If $X$ is a cocompact, finite-dimensional flow space for the group $G$, which admits strong contracting transfers, then $G$ is strongly transfer reducible with respect to the family $\VCyc$, in particular $G$ satisfies the Farrell--Jones conjecture with finite wreath products.
\end{cor}

In \cite{busemann} the authors use this to extend the proof of the Farrell--Jones conjecture for CAT(0)-groups to a larger class of groups. In particular, giving a unified proof for hyperbolic and CAT(0)-groups and proving the Farrell--Jones conjecture for all groups acting properly and cocompactly on a finite product of hyperbolic graphs.

The proof of \cref{thm:main} will decompose the flow space into three parts; the part without a short $G$-period in \cref{sec:long}, the nonperiodic part with short $G$-period in \cref{sec:axes} and the periodic part with short $G$-period in \cref{sec:periodic}. We will construct a cover for each of the three parts and take their union. 

The construction for the cover of the part without short $G$-period is based on an idea of Arthur Bartels and Roman Sauer. We begin by constructing a countable, locally finite cover which is long in direction of the flow but has arbitrary dimension. By cutting overlapping subsets from previous elements of the cover we obtain a disjoint collection of subsets. Enlarging them in direction of the flow will produce a collection of subsets which is a long cover except for a subspace of lower dimension. Since all intersections are obtained by enlarging in direction of the $\bbR$-action we get an estimate of the dimension independent of $X$. Proceeding by induction we will cover the part without short $G$-period in at most $\dim(X)+1$ steps. To make the argument precise we will need the notion of small inductive dimension, see \cref{sec:dim}.

For the part with short $G$-period the key idea is that passing to the quotient of this subspace by the flow does not increase the dimension. This allows us to construct covers of the quotient and pull them back.

\subsection*{Acknowledgements:} We would like to thank Arthur Bartels and Roman Sauer for explaining to us their idea to use the small inductive dimension to construct covers. Furthermore, we thank Svenja Knopf, Malte Pieper and the referee for helpful comments and suggestions. The first author was supported by the Max-Planck-Society.
\newpage
\section{Basic properties of flow spaces and notations}
We will use the following notations.
\begin{notation}~\label{not:basics}
\begin{enumerate}
\item We will denote the image of $x\in X$ under the action of $g\in G$ by $gx$ and the image of $x\in X$ under the action of $t\in\bbR$ by $\Phi_tx$. The action of $\bbR$ will also be called \emph{flow}.
\item \label{not:basics3} A \emph{family $\calf$ of subgroups} of $G$ is a collection of subgroups which is closed under conjugation and taking subgroups;
\item examples are the family $\Fin$ of all finite subgroups and the family $\VCyc$ of all virtually cyclic subgroups.
\item \label{not:basics4} An \emph{$\calf$-subset} $U$ of a $G$-space is a subset with $gU\cap U\neq \emptyset \Rightarrow gU=U$ and $G_U\coloneqq \{g\mid gU=U\}\in \calf$. An \emph{$\calf$-collection} is an equivariant collection of $\calf$-subsets. An \emph{$\calf$-cover} is an $\calf$-collection which covers the whole space.
\item For a subset $B\subseteq X$ we denote by $\mathring{B}$ the interior of $B$.
\item For $U\subseteq X$ let $\partial_X U$ denote the boundary of $U$ as a subset of $X$.
\item A  $G$-action on $X$ is called \emph{cocompact} if $G\backslash X$ is compact. If the $G$-action on a flow space $X$ is cocompact we call $X$ a cocompact flow space.
\item A $G$-action on $X$ is called \emph{proper} if for every compact subspace $K\subseteq X$ the set $\{g\in G\mid K\cap gK\neq\emptyset\}$ is finite.
\end{enumerate}
\end{notation}
\begin{defi}
\label{def:period}
 For $x\in X$ define the \emph{period} of $x$ as $\inf\{t\mid t>0, x=\Phi_tx\}$. If that set is empty, we say that the period of $x$ is $\infty$. The  \emph{$G$-period} of $x$ is the period of $Gx\in G\backslash X$ with respect to the induced flow on the quotient. The \emph{flow line} through a point is its orbit under the flow.
 \end{defi}
 \begin{notation}
 \label{not:subsp}
 For $\gamma\in \bbR$ we will consider the following subspaces of $X$ satisfying the stated restrictions on the period and $G$-period. Here -- denotes that there is no condition on the period or $G$-period.\\
 \begin{center} \begin{tabular}{l|c|c}
      Notation & period & $G$-period \\\hline
      $X_{\leq \gamma}$&--&$[0,\gamma]$\\
      $X_{>\gamma}$&--&$(\gamma,\infty]$\\
      $X'_{\cyc}$& $[0,\infty)$&--\\
      $X'_{\axes}$&$\infty$&$[0,\infty)$\\
      $X_{\cyc,\gamma}$&$[0,\infty)$&$[0,\gamma]$\\
      $X_{\cyc,\gamma,>0}$&$(0,\infty)$&$(0,\gamma]$\\
      $X_{\axes,\gamma}$&$\infty$&$[0,\gamma]$
 \end{tabular}\end{center}
 All above subspaces are invariant under the $G\times \bbR$-action. We will denote the quotients of these subspaces under the $\bbR$-action by $Y_{\leq \gamma}, Y_{>\gamma}$ etc. The quotient spaces are again $G$-spaces.
 \end{notation}
 Later we will omit $\gamma$ from the notation if $\gamma$ is fixed. By \cref{lem:4.10} and \cref{lem:xcycclosed} we will see that $X_{\le \gamma}$ is topologically the disjoint union of $X_{cyc,\gamma}$ and $X_{\axes,\gamma}$. We will construct the covers for the two components separately and then take their union. 
 \begin{defi}
 Let $g\in G$. We call $c\in X$ an \emph{axis of $g$} if there is $t>0$ with $\Phi_tx=gc$. In this case we define $l(g,c):=t$. The space $X'_{\axes}$ consists of all points that are an axis for some element of $G$. Note that $l(g,\Phi_tc)=l(g,c)$ for all $t\in\bbR$, since $g\Phi_tc=\Phi_tgc=\Phi_t\Phi_{l(g,c)}c=\Phi_{l(g,c)}\Phi_tc$. Furthermore, $l(hgh^{-1},hc)$ equals $l(g,c)$.
 \end{defi}
The quotient of a metric space $X$ by a proper and isometric group action of a group $G$ is metrizable using
\[d(Gx,Gy)\coloneqq \inf_{g\in G}d(x,gy).\]
\begin{lemma}
\label{prop:sep}
Let $X$ be a metric space with a proper, cocompact and isometric $G$-action. Then $X$ is second-countable and locally compact.
\end{lemma}
\begin{proof}
Let $\pi\colon X\to G\backslash X$ be the projection. The family $\{\pi(B_{1/n}(x))\}_{x\in X}$ is an open cover of $G\backslash X$ for every $n\in \bbN$. Since $G\backslash X$ is compact there exists a finite subcover, i.e. a finite subset $I_n\subseteq X$ with $X=G(\bigcup_{x\in I_n}B_{1/n}(x))$. A countable basis for the topology is given by $\mcU\coloneqq \{gB_{1/n}(x)\mid g\in G,n\in\bbN, x\in I_n\}$.

Let $x\in X$.
\begin{claim} There exists $\epsilon>0$ such that $S:=\{g\in G\mid B_\epsilon(x)\cap gB_\epsilon(x)\neq\emptyset\}$ is finite.
\end{claim}
Otherwise there exists a sequence $x_n$ converging to $x$ and $g_n\in G$ with $g_n\neq g_m$ for $n\neq m$ such that $g_nx_n$ converges to $x$. \[K':=\{g_nx_n\mid n\in\bbN\}\cup\{x_n\mid n\in\bbN\}\cup\{x\}\] is compact and $g_nx_n\in K'\cap g_nK'$. Since the $G$-action is proper the set $\{g_n\mid n\in\bbN\}$ has to be finite, a contradiction to the assumption that all $g_n$ are different.

Now let $\epsilon>0$ be such that $S:=\{g\in G\mid B_\epsilon(x)\cap gB_\epsilon(x)\neq\emptyset\}$ is finite. And let $x_n\in B_{\epsilon/2}(x)$ be any sequence. Since $G\backslash X$ is compact there exists a subsequence $x_{n_k}$ converging in the quotient to some $z\in G\backslash X$. There is $y\in B_\epsilon(x)$ mapping to $z$ and there exist $s_n\in S$ with $s_{n_k}x_{n_k}$ converging to $y$. Since $S$ is finite there exists $s\in S$ and again a subsequence such that $sx_{n_k}$ converges to $y$. This implies that $x_{n_k}$ converges to $s^{-1}y$ and since $X$ is metric $\overline{B_{\epsilon/2}(x)}$ is therefore compact.
\end{proof}
\begin{rem} A group action on a locally compact space is proper, if and only if we can find for every point $x$ a small open neighborhood $U$ such that the set $ \{g\in G \mid gU\cap U \neq \emptyset\}$ is finite.  

In the situation where the group action is cocompact, proper and isometric \cref{prop:sep} implies that the above definition of cocompact is equivalent to the existence of a compact subset $K\subseteq X$ with $GK=X$.
\end{rem}

The following lemma will be useful to extend open covers of the subspaces from \cref{not:subsp} to the entire space.

\begin{lemma}\label{lem:openextension} Let $X$ be a metrizable space and $V,A$ be subsets such that $V$ is an open subset of $A$. Then the open subset $U_V\subseteq X$ given by
\[U_V\coloneqq \{x\in X\mid d(x,V)<d(x,A\setminus V)\}\]
 for some metric $d$ on $X$ has the following properties:
\begin{enumerate}
\item \label{lem:openextension1} We have  $U_V\cap A=V$.
\item Two such extensions $U_V,U_{V'}$ of $V,V'\subset A$ intersect if and only if $V,V'$ intersect.
\item If $G$ acts isometrically on $X$ and $A$ is a $G$-subspace, we have $gU_V=U_{gV}$. In particular,
\[\{g\in G\mid gV\cap V\neq \emptyset\}=\{g\in G\mid gU_V\cap U_V\neq \emptyset\}.\]
\item The boundary of $U_V$ intersects $A$ exactly in the boundary of $V$ (as a subspace of $A$), i.e.
\[(\partial_{X} U_V) \cap A=\partial_{A}(V).\]
\end{enumerate}
\end{lemma}
\begin{proof}~
\begin{enumerate}
\item A point $x$ in $A$ is either in $V$ in which case $d(x,V)<d(x,A\setminus V)$ since the right hand side is positive or it is not in $V$, in which case $d(x,V)\ge d(x,A\setminus V).$
\item If $V,V'$ intersect, also $U_V,U_{V'}$ intersect since they contain $V$ and $V'$ respectively. 

Now suppose $V\cap V'=\emptyset$. In this case $d(x,A\setminus V)\leq d(x,V')$ and $d(x, A\setminus V')\leq d(x,V)$ for all $x\in X$. Hence for $x\in U_V$ we get
\[d(x,A\setminus V')\leq d(x,V)<d(x,A\setminus V)\leq d(x,V)\]
and $x\notin U_{V'}$.

\item Since $G$ acts isometrically and $A$ is $G$-invariant we get
\begin{align*}
    x\in gU_V&\Leftrightarrow d(g^{-1}x,V)<d(g^{-1}x, A\setminus V)\\
    &\Leftrightarrow d(x,gV)<d(x,g(A\setminus V))=d(x,A\setminus gV)\\
    &\Leftrightarrow x\in U_{gV}.\qedhere
\end{align*}
\item Using \eqref{lem:openextension1} we obtain:
\begin{eqnarray*}
(\partial_X U_V )&=& \overline{U_V} \cap \overline{X\setminus U_V} \\
&\supset&  \overline{U_V\cap A} \cap \overline{(X\setminus U_V)\cap A}\\
&=&\overline{V} \cap \overline{A\setminus V}=\partial_A(V)\end{eqnarray*}
Thus $\partial_A(V)$ is contained in $(\partial_X U_V )\cap A$. Conversely, we have 
\begin{eqnarray*}
\partial_X(U_V)\cap A&\subset&\{x\in X\mid d(x,V)=d(x,A\setminus V)\}\cap A\\
&=&\{x\in A\mid d(x,V)=d(x,A\setminus V)=0\}\end{eqnarray*}
and the latter is just $\partial_A(V)$.
\end{enumerate}

\end{proof}

\section{Dimension theory}\label{sec:dim}
Let us recall the definition of the small inductive dimension.
\begin{defi}[{\cite[Definition 1.1.1]{engelking1978dimension}}]
To every regular space $X$ we assign the \emph{small inductive dimension $\ind(X)\in \bbN\cup\{-1,\infty\}$} given by the following properties:
\begin{enumerate}
\item $\ind (X)=-1$ if and only if $X = \emptyset$;
\item $\ind (X)\leq n$, where $n\in\bbN$ if for every point $x\in X$ and each neighborhood $V\subseteq X$ of the point $x$ there exists an open set $U\subseteq X$ such that $x\in U\subseteq V$ and $\ind(\partial U) \leq n-1$;
\item $\ind (X) = n$ if $\ind (X) \leq n$ and $\ind (X) > n-1$, i.e., the inequality $\ind (X)\leq n-1$ does not hold;
\item $\ind (X) = \infty$ if $\ind (X) > n$ for all $n\in\bbN\cup\{-1\}$.
\end{enumerate}
\end{defi}
The elementary fact that for $A,B\subset X$ we have
\[\partial(A\cap B),\partial(A\cup B),\partial(A\setminus B)\subset \partial A \cup \partial B\mbox{ and } \partial_A(A\cap B)\subset \partial_X B\]
and the following theorems will be used repeatedly to estimate the inductive dimension in the sequel.

\begin{thm}[{Subspace theorem \cite[Theorem 1.2.2]{engelking1978dimension}}]
\label{thm:subspace}
For every subspace $M$ of a regular space $X$ we have $\ind( M)\leq\ind (X)$.
\end{thm}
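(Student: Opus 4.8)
The plan is to argue by induction on $n=\ind(X)$. If $\ind(X)=\infty$ there is nothing to prove, so assume $n$ is finite and that the statement is already known for every regular space of small inductive dimension at most $n-1$. The base case $n=-1$ is immediate: $\ind(X)=-1$ means $X=\emptyset$, hence $M=\emptyset$ and $\ind(M)=-1$. Before starting the inductive step I would note that a subspace of a regular space is again regular, so that $\ind(M)$ is well defined and the induction hypothesis may legitimately be applied to subspaces of subspaces.

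For the inductive step, fix a point $x\in M$ and a neighborhood $V$ of $x$ in $M$, and write $V=W\cap M$ for some open $W\subseteq X$. Since $\ind(X)\le n$, there is an open set $U\subseteq X$ with $x\in U\subseteq W$ and $\ind(\partial_X U)\le n-1$. Put $U''\coloneqq U\cap M$; this is open in $M$, contains $x$, and satisfies $U''\subseteq V$. The crucial observation is the elementary point-set fact that the boundary of $U''$ taken inside $M$ is contained in $\partial_X U$: indeed $\operatorname{cl}_M(U'')\subseteq\operatorname{cl}_X(U)\cap M$ while $U''$ is already open in $M$, so $\partial_M U''\subseteq(\operatorname{cl}_X(U)\setminus U)\cap M=\partial_X U\cap M\subseteq\partial_X U$. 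Thus $\partial_M U''$ is a subspace of $\partial_X U$, which has small inductive dimension at most $n-1$, and the induction hypothesis gives $\ind(\partial_M U'')\le n-1$. As $x$ and $V$ were arbitrary, this shows $\ind(M)\le n$, closing the induction.

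I do not expect a genuine obstacle here: the argument is a routine transfinite-style induction on the dimension. The only two places that require care are checking that subspaces of regular spaces are regular, so the inductive quantity is defined throughout, and verifying the containment $\partial_M(U\cap M)\subseteq\partial_X U$ for $U$ open in $X$; both are standard facts from general topology.
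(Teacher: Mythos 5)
Your induction on $\ind(X)$ is correct and is precisely the standard argument for the subspace theorem; the paper does not give its own proof but simply cites \cite[Theorem~1.2.2]{engelking1978dimension}, where the same induction appears. The one small imprecision is that a neighborhood $V$ of $x$ in $M$ need not itself have the form $W\cap M$ with $W$ open in $X$ --- you should first pass to an open set $O$ of $M$ with $x\in O\subseteq V$ and then write $O=W\cap M$ --- but this changes nothing in the argument.
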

\begin{thm}[{Sum theorem \cite[Theorem 1.5.3]{engelking1978dimension}}]
\label{thm:sum}
If a second-countable metric space $X$ can be represented as the union of a sequence $F_k,k\in\bbN$ of closed subspaces such that $\ind(F_k)\leq n$, for every $k\in\bbN$, then $\ind (X)\leq n$.
\end{thm}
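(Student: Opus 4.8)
Since this is a standard fact from dimension theory, my plan is to reconstruct the classical argument, which runs by induction on $n$ and uses throughout only that second-countable metric spaces are Lindel\"of and, being metric, normal, together with the subspace theorem \cref{thm:subspace}. The base case $n=-1$ is trivial, as $\ind F_k\le -1$ forces $F_k=\emptyset$ and hence $X=\emptyset$. For $n=0$ I would reduce to showing that $X$ has the clopen separation property: any two disjoint closed sets $A,B\subseteq X$ are separated by a clopen $C$ with $A\subseteq C\subseteq X\setminus B$. This suffices, since applying it with $A=\{x\}$ and $B=X\setminus V$ for a point $x$ and an open neighbourhood $V$ yields a clopen $U$ with $x\in U\subseteq V$ and $\partial_X U=\emptyset$, so $\ind\partial_X U=-1$ and hence $\ind X\le 0$. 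To obtain the separation property one first checks it inside each $F_k$: since $\ind F_k\le 0$, every point of $F_k$ has arbitrarily small clopen neighbourhoods, so $F_k$ is covered by clopen sets each avoiding $A$ or avoiding $B$; a countable subcover (as $F_k$ is Lindel\"of) disjointifies to a countable clopen partition of $F_k$, and collecting the pieces that avoid $B$ gives a clopen subset of $F_k$ separating $A\cap F_k$ from $B\cap F_k$. One then patches over $k$: starting from disjoint open $G_0\supseteq A$, $H_0\supseteq B$ with $\overline{G_0}\cap\overline{H_0}=\emptyset$ (normality), one inductively enlarges to disjoint open $G_k\supseteq\overline{G_{k-1}}$, $H_k\supseteq\overline{H_{k-1}}$ with $\overline{G_k}\cap\overline{H_k}=\emptyset$ and $F_k\subseteq G_k\cup H_k$, using at stage $k$ the clopen separation inside $F_k$ of $\overline{G_{k-1}}\cap F_k$ from $\overline{H_{k-1}}\cap F_k$ together with normality. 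Then $C:=\bigcup_k G_k$ is open, disjoint from the open set $\bigcup_k H_k$ by the nesting, and $C\cup\bigcup_k H_k\supseteq\bigcup_k F_k=X$; hence $C$ is clopen and separates $A$ from $B$.

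For the inductive step, assume the theorem in dimension $n-1$ for every second-countable metric space. To obtain $\ind X\le n$ it suffices, for disjoint closed $A,B\subseteq X$, to produce a closed \emph{partition} $L$ — that is, $X\setminus L=G\sqcup H$ with $G\supseteq A$, $H\supseteq B$ disjoint open — such that $\ind L\le n-1$; taking $A=\{x\}$, $B=X\setminus V$ and $U:=G$ then gives the required small neighbourhood with $\partial_X U\subseteq L$. One constructs $L$ by the same patching scheme as in the case $n=0$, building nested disjoint open sets $G_k\supseteq\overline{G_{k-1}}\supseteq A$ and $H_k\supseteq\overline{H_{k-1}}\supseteq B$, but now at stage $k$ one uses $\ind F_k\le n$ to separate $\overline{G_{k-1}}\cap F_k$ from $\overline{H_{k-1}}\cap F_k$ only up to a closed set of inductive dimension $\le n-1$ inside $F_k$ (this in turn comes from a Lindel\"of-and-shrinking argument inside $F_k$, invoking the case $n-1$ of the theorem to bound the dimension of the union of the relevant boundaries), extends it to $X$ via \cref{lem:openextension}, and records it as a closed set $P_k$ with $\ind P_k\le n-1$. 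The resulting $L:=X\setminus(\bigcup_k G_k\cup\bigcup_k H_k)$ is a partition between $A$ and $B$ contained in $\bigcup_k P_k$; since $\bigcup_k P_k$ is a countable union of closed sets of inductive dimension $\le n-1$, the case $n-1$ of the theorem together with \cref{thm:subspace} gives $\ind L\le n-1$, completing the induction.

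I expect the main obstacle to be the bookkeeping in the patching steps, in both the base case and the inductive step: the hypothesis only controls boundaries of small sets \emph{inside} each closed piece $F_k$, whereas one needs control of boundaries inside the ambient space $X$, so the enlargements passing from $F_k$ to $X$ must be set up so as neither to destroy the disjointness achieved at earlier stages nor to enlarge the partition beyond the already-controlled closed sets $P_k$. It is exactly here that second-countability (via the Lindel\"of property, to reduce to countable covers) and metrizability (via normality and the extension \cref{lem:openextension}) enter in an essential way; the statement can in fact fail for $\ind$ outside suitably restricted classes of spaces. An alternative organisation reduces the general case to the zero-dimensional one through the decomposition theorem $\ind Z\le n\iff Z=Z_0\cup\dots\cup Z_n$ with $\ind Z_i\le 0$, but this merely relocates the same difficulty.
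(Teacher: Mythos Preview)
The paper does not give its own proof of \cref{thm:sum}; it merely quotes the statement and cites \cite[Theorem~1.5.3]{engelking1978dimension}. Your proposal is therefore not competing with any argument in the paper, and the appropriate comparison is simply that you have supplied what the paper outsources to Engelking. The outline you give --- induction on $n$, the $n=0$ case via clopen separation obtained by a Lindel\"of-plus-patching argument over the closed pieces $F_k$, and the inductive step via thin partitions assembled the same way and bounded using the induction hypothesis together with \cref{thm:subspace} --- is indeed the classical route and is essentially the one taken in Engelking's textbook; your identification of second-countability (Lindel\"of) and metrizability (normality, and the extension device of \cref{lem:openextension}) as the crucial hypotheses is also accurate.

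Your sketch is correct in spirit, but be aware that the patching step is genuinely delicate and your description of it is a bit loose. In particular, at stage $k$ of the induction you need a partition in $F_k$ between two closed sets, not just a small neighbourhood of a point, so you are implicitly using that for separable metric spaces $\ind$ coincides with the large inductive dimension $\mathrm{Ind}$ (equivalently, that one can separate disjoint closed sets by partitions of dimension $\le n-1$, not just points from closed sets); this is true but is itself a nontrivial theorem. Likewise, the passage from a thin partition inside $F_k$ to one inside $X$ has to be arranged so that the resulting global partition $L$ is actually contained in the union of the recorded $P_k$, which requires some care beyond a single invocation of \cref{lem:openextension}. None of this is a gap in your strategy, but if you intend to include a full proof rather than a citation, these are the places where the details need to be written out.
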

\begin{thm}[{Cartesian product theorem \cite[Theorem 1.5.16]{engelking1978dimension}}]
\label{thm:prod}
For every pair $X, Y$ of second-countable metric spaces of which at least one is non-empty we have
\[\ind(X\times Y)\leq \ind (X)+\ind(Y).\]
\end{thm}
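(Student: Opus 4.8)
The plan is to argue by induction on $n=\ind(X)+\ind(Y)$. If this sum equals $\infty$ there is nothing to prove, so I assume from now on that $\ind(X)$ and $\ind(Y)$ are both finite. First I dispose of the case in which one of the spaces is empty: if, say, $X=\emptyset$, then $X\times Y=\emptyset$, so $\ind(X\times Y)=-1\le -1+\ind(Y)=\ind(X)+\ind(Y)$, the last inequality holding because $Y$ is then forced to be non-empty and hence $\ind(Y)\ge 0$. So I may assume that $X$ and $Y$ are both non-empty, in particular $n\ge 0$, and (strong induction) that the statement is already known for all pairs of second-countable metric spaces whose inductive dimensions add up to strictly less than $n$; for $n=0$ this hypothesis is vacuous and the argument below is self-contained.

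Now fix a point $(x,y)\in X\times Y$ and an open neighbourhood $W$ of $(x,y)$. Since $X\times Y$ carries the product topology, there are open sets $V_1\subseteq X$ and $V_2\subseteq Y$ with $x\in V_1$, $y\in V_2$ and $V_1\times V_2\subseteq W$. Applying the defining property of the small inductive dimension to $x\in V_1$ and to $y\in V_2$ yields open sets $U_1\subseteq X$ and $U_2\subseteq Y$ with $x\in U_1\subseteq V_1$, $y\in U_2\subseteq V_2$, $\ind(\partial_X U_1)\le\ind(X)-1$ and $\ind(\partial_Y U_2)\le\ind(Y)-1$. Set $U\coloneqq U_1\times U_2$; then $U$ is open and $(x,y)\in U\subseteq W$. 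Using $\overline{U_1\times U_2}=\overline{U_1}\times\overline{U_2}$ one computes
\[\partial_{X\times Y}U=\bigl(\overline{U_1}\times\overline{U_2}\bigr)\setminus\bigl(U_1\times U_2\bigr)=\bigl(\partial_X U_1\times\overline{U_2}\bigr)\cup\bigl(\overline{U_1}\times\partial_Y U_2\bigr),\]
which exhibits $\partial_{X\times Y}U$ as the union of two closed subsets of $X\times Y$.

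It remains to bound the inductive dimension of each of these two pieces. A product of two second-countable metric spaces is again second-countable and metrizable, and by \cref{thm:subspace} we have $\ind(\overline{U_1})\le\ind(X)$ and $\ind(\overline{U_2})\le\ind(Y)$. Hence either $\partial_X U_1=\emptyset$, so that $\partial_X U_1\times\overline{U_2}=\emptyset$ has dimension $-1\le n-1$, or $\partial_X U_1$ is non-empty and $\ind(\partial_X U_1)+\ind(\overline{U_2})\le(\ind(X)-1)+\ind(Y)=n-1$, so the inductive hypothesis applies (the factor $\overline{U_2}\ni y$ is non-empty) and yields $\ind(\partial_X U_1\times\overline{U_2})\le n-1$; symmetrically $\ind(\overline{U_1}\times\partial_Y U_2)\le n-1$. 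Now \cref{thm:sum}, applied to the two-element (hence countable) closed cover of $\partial_{X\times Y}U$ just exhibited, gives $\ind(\partial_{X\times Y}U)\le n-1$. As $(x,y)$ and $W$ were arbitrary, this shows $\ind(X\times Y)\le n=\ind(X)+\ind(Y)$, completing the induction.

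The only genuinely delicate point is the bookkeeping: one must ensure that every time \cref{thm:sum} or the inductive hypothesis is invoked the spaces involved really are second-countable and metrizable — which holds because subspaces and finite products of such spaces inherit these properties — and one must separate off the degenerate cases in which a boundary is empty, where the ``dimensions add to $n-1$'' bound is unavailable but also unnecessary. The identity for $\partial_{X\times Y}(U_1\times U_2)$ and the closedness of its two constituents are elementary point-set topology, and the passage from the pointwise statement to $\ind(X\times Y)\le n$ is immediate from the definition of the small inductive dimension.
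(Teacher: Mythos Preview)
The paper does not prove this theorem; it is stated with a citation to Engelking's book and no proof is given in the paper itself. Your argument is the standard one (and essentially the one in Engelking): induction on $\ind(X)+\ind(Y)$, the identity
\[
\partial_{X\times Y}(U_1\times U_2)=\bigl(\partial_X U_1\times\overline{U_2}\bigr)\cup\bigl(\overline{U_1}\times\partial_Y U_2\bigr),
\]
the inductive hypothesis applied to each piece, and the Sum theorem (\cref{thm:sum}) to bound the dimension of the union. Your bookkeeping is correct: subspaces and finite products of second-countable metric spaces are again second-countable and metrizable, the two pieces are closed in $\partial_{X\times Y}U$, and you have separated off the degenerate empty-boundary cases so that the inductive hypothesis is only invoked when both factors are non-empty. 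Nothing is missing.
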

\begin{thm}[{\cite[Theorem 1.7.7]{engelking1978dimension}}]
\label{thm:compare}
The inductive dimension of a second-countable metric space agrees with its covering dimension.
\end{thm}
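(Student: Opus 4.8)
This is the classical coincidence theorem for separable metric spaces, and the plan is to prove it by establishing the two inequalities $\dim X\le\ind X$ and $\ind X\le\dim X$ separately, where $\dim$ denotes covering (Lebesgue) dimension; since a metric space is second countable iff it is separable, all of the standard separable-metric dimension theory applies to $X$. Each inequality is proved by induction on the relevant dimension, and the common base case is the observation that a space is empty iff it has covering dimension $-1$ iff it has small inductive dimension $-1$.

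For $\dim X\le\ind X$ I would induct on $n=\ind X$. Given $\ind X\le n$ and a finite open cover $\mathcal U$, the definition of $\ind$ provides for each $x\in X$ an open neighbourhood $W_x$ contained in some member of $\mathcal U$ with $\ind(\partial_X W_x)\le n-1$; by the Lindel\"of property I pass to a countable subfamily $\{W_j\}_{j\in\bbN}$ still covering $X$ and put $F=\bigcup_j \partial_X W_j$. Each $\partial_X W_j$ is closed in $F$ with $\ind\le n-1$, so \cref{thm:sum} gives $\ind F\le n-1$, hence $\dim F\le n-1$ by the inductive hypothesis. On $X\setminus F$ every $W_j\cap(X\setminus F)$ is clopen---its closure can only add boundary points, which lie in $F$---so after disjointifying the $W_j\cap(X\setminus F)$ one obtains a disjoint open refinement of $\mathcal U$ on $X\setminus F$, in particular $\dim(X\setminus F)\le 0$. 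The addition theorem $\dim(A\cup B)\le\dim A+\dim B+1$ then yields $\dim X=\dim\bigl(F\cup(X\setminus F)\bigr)\le(n-1)+0+1=n$.

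For $\ind X\le\dim X$ I would again induct, this time on $n=\dim X$, and here is where I expect the real work. The cleanest route is via the decomposition theorem: if $\dim X\le n$ then $X=X_1\cup\dots\cup X_{n+1}$ with $\dim X_i\le 0$ for each $i$. A separable metric space of covering dimension $0$ has a countable base of clopen sets, hence small inductive dimension $\le 0$; applying the addition theorem for the small inductive dimension, $\ind(A\cup B)\le\ind A+\ind B+1$, over the $n+1$ pieces gives $\ind X\le n$. Alternatively one argues straight from the partition lemma---in a space of covering dimension $\le n$ any two disjoint closed sets can be separated by a partition $C$ with $\dim C\le n-1$---by separating $\{x\}$ from $X\setminus V^\circ$ by such a $C$ and taking for $U$ the clopen-in-$(X\setminus C)$ component of $X\setminus C$ containing $x$; then $x\in U\subseteq V$ and $\partial_X U\subseteq C$, so $\dim\partial_X U\le n-1$ by subspace monotonicity and, by the inductive hypothesis, $\ind\partial_X U\le n-1$, whence $\ind X\le n$.

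The main obstacle is the second inequality. The first is soft---bookkeeping with \cref{thm:sum}, the Lindel\"of property, and the addition theorem for covering dimension---whereas the decomposition theorem (equivalently, the partition lemma) is the genuine content of separable-metric dimension theory. Its proof rests on non-elementary characterizations of covering dimension that I would have to import: that $\dim X\le n$ iff every map $X\to I^{n+1}$ is inessential, or the Eilenberg--Otto criterion that $\dim X\le n$ iff every family of $n+1$ pairs of disjoint closed sets admits partitions with empty common intersection; see \cite{engelking1978dimension}. Combining the two inequalities gives $\ind X=\dim X$.
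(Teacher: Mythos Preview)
The paper does not give its own proof of this statement; it is simply quoted from \cite[Theorem~1.7.7]{engelking1978dimension} as a black box, to be used later (e.g.\ in \cref{lem:4.12} and \cref{thm:long}) to pass freely between the small inductive dimension and the covering dimension. So there is nothing in the paper to compare your argument against.

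That said, your outline is a faithful sketch of the classical proof that appears in Engelking. The inequality $\dim X\le\ind X$ via the Lindel\"of refinement, the sum theorem, and the addition theorem is exactly the standard soft direction, and for $\ind X\le\dim X$ both routes you describe---the decomposition into zero-dimensional pieces plus the addition theorem for $\ind$, or the partition lemma followed by induction---are the ones Engelking develops. You are right that the second inequality is where the real content lies, and that it ultimately rests on characterizations of $\dim$ (essential maps, Eilenberg--Otto) that you would have to import rather than reprove. For the purposes of this paper, citing the result is entirely appropriate; a self-contained proof would be disproportionate.
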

\section{Boxes}
To construct the open sets for the long part, we need the notion of a box and some of its basic properties.
\begin{defi}[{\cite[Definition 2.3]{bartels2008equivariant}}]
\label{defi:box}
Let $X$ be a flow space. A \emph{box} $B$ is a subset $B\subseteq X$ with the following properties:
\begin{enumerate}
\item $B$ is a compact $\mcF in$-subset;
\item There exists a real number $l_B>0$, called the \emph{length} of the box $B$, with the property that for every $x\in B$ there exist real numbers $a_-(x)\leq 0\leq a_+(x)$ and $\epsilon(x)>0$ satisfying
\begin{align*}
l_B&=a_+(x)-a_-(x);\\
\Phi_\tau(x)\in B&\text{ for }\tau\in[a_-(x),a_+(x)];\\
\Phi_\tau(x)\notin B&\text{ for }\tau\in(a_-(x)-\epsilon(x),a_-(x))\cup(a_+(x),a_+(x)+\epsilon(x)).
\end{align*}
\end{enumerate}
\end{defi}
To a box $B$ we can assign the \emph{central slice} \[S_B\coloneqq \{x\in B\mid a_-(x)+a_+(x)=0\}.\]
We abuse notation and define $\partial S_B\coloneqq \partial B\cap S_B$ and $\mathring{S}_B\coloneqq  \mathring{B}\cap S_B$.
\begin{lemma}[{\cite[Lemma 2.6]{bartels2008equivariant}}]
\label{lemma:homeo}
The map
\[\mu_B\colon S_B\times [-l_B/2,l_B/2]\xrightarrow{\cong} B,\quad (x,t)\mapsto \Phi_t(x)\]
is a $G_B$-homeomorphism.
\end{lemma}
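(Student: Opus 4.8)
The plan is to exhibit $\mu_B$ as a $G_B$-equivariant continuous bijection that admits an explicit inverse, and then upgrade it to a homeomorphism by a compactness argument. First, $\mu_B$ is well-defined: for $x\in S_B$ the two box conditions $l_B=a_+(x)-a_-(x)$ and $a_-(x)+a_+(x)=0$ force $a_\pm(x)=\pm l_B/2$, so $\Phi_t(x)\in B$ for every $t\in[-l_B/2,l_B/2]$. Continuity is immediate, since $\mu_B$ is the restriction of the continuous flow $X\times\bbR\to X$. For equivariance, note that for $g\in G_B$ the identity $\Phi_\tau(gx)=g\Phi_\tau(x)$ together with $gB=B$ shows $\Phi_\tau(gx)\in B$ iff $\Phi_\tau(x)\in B$; hence $a_\pm(gx)=a_\pm(x)$, so $G_B$ preserves $S_B$, and $\mu_B(gx,t)=\Phi_t(gx)=g\Phi_t(x)=g\,\mu_B(x,t)$ (with $G_B$ acting on $S_B\times[-l_B/2,l_B/2]$ through the first factor).

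The key algebraic step is the identity $a_\pm(\Phi_s(x))=a_\pm(x)-s$ for every $x\in B$ and every $s\in[a_-(x),a_+(x)]$. To see this, put $p=\Phi_s(x)\in B$, so that $\Phi_\tau(p)=\Phi_{\tau+s}(x)$; then both $[a_-(p),a_+(p)]$ and $[a_-(x)-s,a_+(x)-s]$ are closed intervals of length $l_B$ containing $0$ on which the $p$-orbit stays in $B$, while just outside either interval (within the relevant $\epsilon$-collar) the orbit leaves $B$. If these two intervals were distinct one could choose a parameter $\tau$ strictly between the corresponding left (or right) endpoints and deduce that $\Phi_\tau(p)$ simultaneously lies in $B$ (being interior to one interval) and leaves $B$ (lying in a collar of the other), a contradiction; hence they coincide. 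Granting this, $\mu_B$ is a bijection with inverse $p\mapsto(\Phi_{c(p)}(p),-c(p))$, where $c(p):=\tfrac12(a_-(p)+a_+(p))$: one checks $-c(p)\in[-l_B/2,l_B/2]$, that $c(p)$ is the midpoint of $[a_-(p),a_+(p)]$ so $\Phi_{c(p)}(p)\in B$, and that the identity yields $a_\pm(\Phi_{c(p)}(p))=\pm l_B/2$, i.e.\ $\Phi_{c(p)}(p)\in S_B$; surjectivity and injectivity then follow by a short direct computation.

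Finally I would show that $S_B$ is compact, which makes $S_B\times[-l_B/2,l_B/2]$ compact and lets me conclude, since a continuous bijection from a compact space to a Hausdorff space (here $B\subseteq X$ with $X$ metric) is automatically a homeomorphism, and its inverse is then equivariant as well. As $B$ is compact in a metric space it is closed, so it suffices to prove $S_B$ closed in $B$: if $x_n\in S_B$ and $x_n\to x\in B$, then for each $t\in[-l_B/2,l_B/2]$ continuity of the flow and closedness of $B$ give $\Phi_t(x)=\lim_n\Phi_t(x_n)\in B$, so $\Phi_{[-l_B/2,l_B/2]}(x)\subseteq B$; combining this with the $\epsilon$-collar conditions for $x$ (a parameter in a collar of $[a_-(x),a_+(x)]$ that also lay in $[-l_B/2,l_B/2]$ would give a point of the $x$-orbit that is both in and out of $B$) forces $[a_-(x),a_+(x)]=[-l_B/2,l_B/2]$, i.e.\ $x\in S_B$.

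I expect the main obstacle to be the key identity $a_\pm(\Phi_s(x))=a_\pm(x)-s$. The box axioms only constrain a flow line near the endpoints of $[a_-(x),a_+(x)]$ and do \emph{not} forbid it from leaving and later re-entering $B$, so ruling out a shift of the interval requires carefully chasing the $\epsilon$-collar conditions for $x$ and $p=\Phi_s(x)$ at the same time. Once this identity (and the analogous closedness argument for $S_B$) is in place, everything else is routine.
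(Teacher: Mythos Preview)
The paper does not supply its own proof of this lemma; it simply quotes the result from \cite[Lemma~2.6]{bartels2008equivariant}. Your argument is correct and is essentially the standard one: the translation identity $a_\pm(\Phi_s(x))=a_\pm(x)-s$ is the heart of the matter, and your collar-vs-collar contradiction establishes it cleanly, after which the explicit inverse $p\mapsto(\Phi_{c(p)}(p),-c(p))$ and the compact-to-Hausdorff upgrade are routine.
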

Consequently, we can define a projection $\pr_B$ to the central slice, via 
\[\pr_B:B \rightarrow S_B,\quad x\mapsto \pr_1\circ \mu_B^{-1}(x).\]
By definition of $\mu_B$ this is the same as $x\mapsto \Phi_{-\pr_2(\mu_B^{-1}(x))}(x)$.

\begin{definition} An \emph{open box} is the interior of a box. 
\end{definition}

\begin{lemma}[{\cite[Lemma 2.16]{bartels2008equivariant}}]
\label{lemma:box}
For every $x\in X_{>\gamma}$ and for every $0<l\le \gamma$ there exists a box $B$ of length $l$ with $x\in \mathring{S}_B$ and $G_B=G_x$.
\end{lemma}
\begin{rem}
In \cite[Lemma 2.16]{bartels2008equivariant} the space $X\setminus X^\bbR$ is assumed to be locally connected. This is only needed to find a box as in \cref{lemma:box} with the additional assumption that $S_B$ is connected. Furthermore, in \cite[Lemma 2.16]{bartels2008equivariant} the lemma is only stated for $l<\gamma$. Since $x\in X_{>\gamma}$ is also in $X_{>\gamma+\varepsilon}$ for $\varepsilon$ small enough, it follows that the lemma also holds for $l=\gamma$.
\end{rem}

\begin{lemma}\label{lem:locfinboxes} Let $X$ be a locally compact, second-countable flow space and $\alpha,\delta>0$. 
Then there is a countable collection of compact subsets $(S_i)_{i\in \mathbbm{N}}$ and $(x_i)_{i\in \mathbbm{N}}$ with $x_i\in X$
such that:
\begin{enumerate}
\item\label{lem:locfinboxes:a} $\Phi_{[-10\alpha,10\alpha]}(S_i)\subseteq B_\delta(\Phi_{[-10\alpha,10\alpha]}(x_i))$ is a box of length $20\alpha$ with central slice $S_i$;
\item\label{lem:locfinboxes:b} The interiors of the smaller boxes $\Phi_{[-\alpha,\alpha]}(S_i)$ form a locally finite cover of $X_{>20\alpha}$.
\item\label{lem:locfinboxes:c} For any two $S_i,S_j$ the set
\begin{equation*} \{t\in [-3\alpha,3\alpha]\mid \exists x\in S_i : \Phi_t(x)\in S_j\}\subseteq \mathbbm{R}\end{equation*}
has diameter less than $\alpha$.
\end{enumerate}
\end{lemma}
\begin{proof}
By Lemma~\ref{lemma:box} we can find for every point $x\in X_{>20\alpha}$ a box $\Phi_{[-10\alpha,10\alpha]}S'_x$ of length $20\alpha$ such that $x$ is in the interior of this box and in the central slice. Furthermore, we can choose them in such a way that $S'_{gx}=gS'_x$. We can assume
\[\Phi_{[-10\alpha,10\alpha]}(S'_x)\subseteq B_\delta(\Phi_{[-10\alpha,10\alpha]}(x)),\]
since otherwise we can replace $S_x'$ by $S'_x\cap\bigcap_{t\in [-10\alpha,10\alpha]}\Phi_{-t}(B_\delta(\Phi_tx))$.
Now consider the open cover
\[\{(\Phi_{[-\alpha,\alpha]}S'_x)^\circ\mid x\in X \}\]
and push it along the quotient map $\pi$ to the quotient $G\backslash X$. The quotient $G\backslash X$ is metrizable and hence second-countable and paracompact by \cite[Corollary~2.1.8]{pears1975dimension}. We can thus find a countable, locally finite refinement $\{V(n)\mid n\in \bbN\}$ of this cover. Being a refinement means that we can find for every $n$ an $x(n)$ with $\pi^{-1}(V(n))\subseteq \pi^{-1}(\pi((\Phi_{[-\alpha,\alpha]}S'_{x(n)})^\circ))=G\cdot (\Phi_{[-\alpha,\alpha]}S'_{x(n)})^\circ$.
Now define a cover 
\[\calv\coloneqq \{\pi^{-1}(V(n))\cap g(\Phi_{[-\alpha,\alpha]}S'_{x(n)})^\circ\mid g\in G, n\in \mathbbm{N}\}.\]
This is a countable, locally finite, $G$-invariant open cover of $X$. Choose an enumeration $\calv =\{V_i\mid i\in \mathbbm{N}\}$. We can enlarge these sets by first projecting the closure of $V_i=\pi^{-1}(V(n_i))\cap g_i(\Phi_{[-\alpha,\alpha]}S'_{x(n_i)})^\circ$ to the central slice $gS'_{x(n_i)}$ and then letting it flow by $[-\alpha,\alpha]$. Call the resulting
box $C_i$ and its central slice $D_i$. Let $\calc$ be the collection of boxes $\{C_i\mid i\in \mathbbm{N}\}$.

To show that it is locally finite at some point $x$, pick a compact neighborhood $K$ and note that if $K\cap C_i\neq  \emptyset$, then $\Phi_{[-2\alpha,2\alpha]}(K)\cap \overline{V_i}\neq \emptyset$. Since $\Phi_{[-2\alpha,2\alpha]}(K)$ is compact and the collection $\{\overline{V_i}\mid i\in \mathbbm{N}\}$ locally finite, this can happen only for finitely many $i$.

It remains to establish \cref{lem:locfinboxes}\eqref{lem:locfinboxes:c}. To achieve this we have to subdivide the central slices $D_i$ into finitely many compact sets $S_{i,1},\ldots,S_{i,m_i}$.
We will do this by induction over $i \in \bbN$. If there is a $g\in G$ such that $gD_j=D_i$ for some $j<i$, define 
$S_{i,k}\coloneqq gS_{j,k}$. Otherwise proceed as follows.

Define for $j\in \mathbbm{N}$ a continuous function
\[f_{i,j}: D_i\cap \Phi_{[-3\alpha,3\alpha]}(D_j)\rightarrow [-3\alpha,3\alpha],\quad x\mapsto t \mbox{ with }\Phi_t(x)\in D_j.\]
There is precisely one such $t$ since there is a box of length $6\alpha$ with central slice $D_j$.
This map is continuous by \cite[Definition~4.14]{bartels2008equivariant}. For every $x\in D_i\cap \Phi_{[-3\alpha,3\alpha]}(D_j)$ there is a small, open $\Fin$-neighborhood $U_{i,j,x}$ in $D_i\cap \Phi_{[-3\alpha,3\alpha]}(D_j)$ such that $f_{i,j}(U_{i,j,x})$ has diameter less than $\alpha$. This neighborhood can be extended by \cref{lem:openextension} to an open neighborhood $U'_{i,j,x}$ in $D_i$ such that $U'_{i,j,x}\cap \Phi_{[-3\alpha,3\alpha]}(D_j)=U_{i,j,x}$. 

The set $J_i=\{j\mid D_i\cap \Phi_{[-3\alpha,3\alpha]}(D_j)\neq \emptyset \}$ is finite and thus
$U_{i,x}\coloneqq \bigcap_{j\in J_i} U'_{i,j,x}$ is still an open neighborhood. Let $W_{i,x}\coloneqq \bigcap_{h \in G_{D_i}}h^{-1} U_{i,hx}$. This collection is $G_{D_i}$-invariant. Since $D_i$ is compact we can find a finite subcover $W_{i,x_1},\ldots, W_{i,x_{m_i}}$. This can be chosen in a $G_{D_i}$-equivariant way. This yields a $\Fin$-cover of $D_i$. The new collection 
\[\{S_i\}_{i\in\bbN}\coloneqq \{\overline{W_{n,x_k}}\mid n\in \mathbbm{N},1\le k\le m_i\}.\]
does the job. Note that, since every element of the collection $\{S_i\}_{i\in\bbN}$ is a subset of some $S'_x,~x\in X$ considered in the beginning, we have $\Phi_{[-10\alpha,10\alpha]}(S_i)\subseteq B_\delta(\Phi_{[-10\alpha,10\alpha]}(x))$ for some $x\in X$.
\end{proof}
\section{Covering \texorpdfstring{$X_{>\gamma}$}{the part with large G-period}}\label{sec:long}
We will now construct covers for the part without a short $G$-period. Here $X$ denotes a second-countable, locally compact flow space of dimension $n$ and let $\alpha>0$ be given. Let $\gamma$ be $20\alpha$ and $(S_i)_{i\in \mathbbm{N}}$ be a collection of compact subsets of $X$ 
as in \cref{lem:locfinboxes}. Fix these choices for the rest of this section.
\begin{lemma}\label{lem:thelemma} Let $(A_i)_{i\in \bbN}$ be a collection where $A_i$ is a compact $G_{S_i}$-invariant subset of $\mathring{S}_i$ of inductive dimension at most $k$ for some $k\ge 0$. Then there is a collection of open $G_{S_i}$-invariant subsets $B_i\subseteq \mathring{S}_i$ such that
\begin{enumerate}
\item \label{lem:thelemma1}$A_i\setminus \bigcup_{j\in \mathbbm{N},g\in G}\Phi_{(-3\alpha,3\alpha)}(gB_j)$ has inductive dimension at most $k-1$ for every $i$;
\item \label{lem:thelemma2}Any point is contained in at most $5$ sets of the collection \[\{\Phi_{(-4\alpha,4\alpha)}(gB_i)\mid i\in \mathbbm{N},g\in G\}.\]
\end{enumerate}
\end{lemma}
\begin{proof}
Let \[A'\coloneqq  \bigcup_{j\in \mathbbm{N},g\in G} \Phi_\mathbbm{R}(gA_j);\]
\[A_i'\coloneqq  S_i\cap A'= \bigcup_{j\in \mathbbm{N}, z\in \mathbbm{Z},g\in G} S_i \cap \Phi_{[(z-1)\alpha,z\alpha]}gA_j.\]
 Since $S_i$ and thus also $\Phi_{1-z}g^{-1}S_i$ is the central slice of a box of length greater than $\alpha$, the projection $\Phi_{[0,\alpha]}A_j\to A_j$ restricted to $\Phi_{1-z}g^{-1}S_i\cap \Phi_{[0,\alpha]}A_j$ is injective. The intersection $\Phi_{1-z}g^{-1}S_i\cap \Phi_{[0,\alpha]}A_j$ is compact and $A_j$ is Hausdorff. Hence $\Phi_{1-z}g^{-1}S_i\cap \Phi_{[0,\alpha]}A_j$ is homeomorphic to its image in $A_j$. Therefore, the union in the displayed equation above is a countable union of compact spaces homeomorphic to subspaces of $A_j$. Thus by \cref{thm:subspace} and \cref{thm:sum} we have that $\ind(A_i')\le k$. 

For every $x\in A_i$ we can find an open neighborhood $U_x\subseteq A_i'$ such that we have $\ind (\partial_{A_i'} U_x)\le k-1$ and $\partial_{A_i'} U_x\subseteq \mathring{S}_i$. We can choose those such that $U_{gx}=gU_x$ for $g$ in the finite group $G_{S_i}$ by replacing $U_x$ by $\bigcap_{g\in G_{S_i}} g^{-1}U_{gx}$.

By compactness we can find a finite $G_{S_i}$-subset $F_i\subseteq A_i$ such that $V_i\coloneqq \bigcup_{x\in F_i}U_x$ contains $A_i$. By \cref{lem:openextension}, the open $G_{S_i}$-subset
\[U_i\coloneqq  \{x\in S_i\mid d(x,V_i)<d(x,A_i'\setminus V_i) \} \]
of $S_i$ has the following properties:
\begin{itemize}
\item $U_i\cap A_i' =V_i;$
\item $(\partial_{S_i} U_i) \cap A_i'=\partial_{A_i'}(U_i\cap A_i')$.
\end{itemize}
Thus,
\begin{equation}\ind((\partial_{S_i} U_i)\cap A_i')=\ind(\partial_{A_i'}(V_i))\le \ind(\bigcup_{x\in F_i} \partial_{A_i'}(U_x))\le k-1 ,\label{eq:indpartUi}\end{equation} 
where the last inequality follows from \cref{thm:sum} since $\partial_{A_i'}(U_x)$ is closed in $A_i'$.

Define inductively $G_{S_i}$-invariant subsets
\begin{eqnarray*}
B_i&\coloneqq &U_i \setminus \bigcup_{\substack{j<i, g\in G\\ \Phi_{[-2\alpha,2\alpha]} g\overline{B_j}\cap U_i\neq \emptyset} }\Phi_{(-3\alpha,3\alpha)}g\overline{B_j}\\
&=&U_i \setminus (S_i\cap \bigcup_{\substack{j<i, g\in G\\ \Phi_{[-2\alpha,2\alpha]} g\overline{B_j}\cap U_i\neq \emptyset} }\Phi_{(-3\alpha,3\alpha)}g\overline{B_j}).
\end{eqnarray*}
Now since $S_i$ and $g\overline{B_j}$ are both compact and the $G$-action on the space is proper, there are only finitely many $g\in G$ such that the intersection $\Phi_{[-2\alpha,2\alpha]} g\overline{B_j}\cap U_i$ is not empty. We want to show by induction on $i$ that $\ind(A_i'\cap\partial_{S_i}B_i)\le k-1$: 
\begin{eqnarray*}
\partial_{S_i}B_i&\subseteq& \partial_{S_i} U_i\cup \bigcup_{\substack{j<i, g\in G\\ \Phi_{[-2\alpha,2\alpha]} g\overline{B_j}\cap U_i\neq \emptyset} } \partial_{S_i}(S_i\cap \Phi_{(-3\alpha,3\alpha)}g\overline{B_j})\\
&\subseteq &\partial_{S_i} U_i\cup \bigcup_{\substack{j<i, g\in G\\ \Phi_{[-2\alpha,2\alpha]} g\overline{B_j}\cap U_i\neq \emptyset} } S_i\cap \partial_{X}( \Phi_{(-3\alpha,3\alpha)}g\overline{B_j})\\
&\subseteq &\partial_{S_i} U_i\cup \bigcup_{\substack{j<i, g\in G\\ \Phi_{[-2\alpha,2\alpha]} g\overline{B_j}\cap U_i\neq \emptyset} } S_i\cap (\Phi_{\{\pm 3\alpha\}}g\overline{B_j}\cup \Phi_{(-3\alpha,3\alpha)}(g\partial_{S_j}B_j)).
\end{eqnarray*}
By \cref{lem:locfinboxes}\eqref{lem:locfinboxes:c} and $\Phi_{[-2\alpha,2\alpha]} g\overline{B_j}\cap U_i\neq \emptyset$, we get that $S_i\cap \Phi_{\{\pm (3\alpha)\}}g\overline{B_j}$ is empty. 

Consider the following equation, where the first and fourth equality are from expanding the definitions of $A_i'$ and $A'$, the second equality follows from $A'$ being $\bbR$-invariant, the third equality follows from $\partial_{S_j}B_j\subseteq S_j$ and the last equality is given by writing $\bbR$ as a union of the intervals $[z,z+1]$.
\begin{eqnarray*}&&A_i'\cap S_i\cap  \Phi_{[-3\alpha,3\alpha]}(g\partial_{S_j}B_j)\\
&=& S_i\cap A'\cap \Phi_{[-3\alpha,3\alpha]}(g\partial_{S_j}B_j)\\
&=& S_i\cap \Phi_{[-3\alpha,3\alpha]}(A'\cap g\partial_{S_j}B_j)\\
&=& S_i\cap \Phi_{[-3\alpha,3\alpha]}(A'\cap gS_j\cap g\partial_{S_j}B_j)\\
&=& \bigcup_{h\in G, k\in\bbN} S_i\cap \Phi_{[-3\alpha,3\alpha]}(\Phi_\mathbbm{R}(hA_k)\cap gS_j\cap g\partial_{S_j}B_j)\\
&=& \bigcup_{h\in G, k\in\bbN, z\in\bbZ}S_i\cap \Phi_{[-3\alpha,3\alpha]}(\Phi_{[z,z+1]}(hA_k)\cap gS_j\cap g\partial_{S_j}B_j).
\end{eqnarray*}
Arguing as at the beginning of the proof, the compact space \[S_i\cap \Phi_{[-3\alpha,3\alpha]}(\Phi_{[z,z+1]}(hA_k)\cap gS_j\cap g\partial_{S_j}B_j)\] is homeomorphic to a subspace of $(\Phi_{[z,z+1]}(hA_k)\cap gS_j)\cap g\partial_{S_j}B_j\subseteq g(A_j' \cap \partial_{S_j}B_j)$.
Thus we know that $A_i'\cap \partial_{S_i}(B_i)$ is contained in the union of
a countable collection compact subsets of $A_j' \cap \partial_{S_j}(B_j)$ with  $j<i$ which have inductive dimension at most $k-1$ by induction assumption and the space $\partial_{S_i}U_i\cap A_i'$ whose inductive dimension is at most $k-1$ by \eqref{eq:indpartUi}. We would like to apply \cref{thm:sum}, but $\partial_{S_i}U_i\cap A_i'$ might not be a closed subspace of $S_i$. This problem can be overcome by writing $A_i'$ as a union of countably many compact spaces as in the beginning of the proof. Thus we obtain 
\[\ind (A_i'\cap \partial_{S_i}(B_i))\le k-1.\]
This completes the induction.

Now let us show \eqref{lem:thelemma1}.
We have the following inclusion. The individual steps are explained afterwards.
\begin{eqnarray*}
&&A_i \setminus \bigcup_{j\in \mathbbm{N},g\in G}\Phi_{(-3\alpha,3\alpha)}(gB_j)\\
&\subseteq &A_i \setminus \bigcup_{j\le i ,g\in G}\Phi_{(-3\alpha,3\alpha)}(gB_j)\\
&=&(A_i \setminus \Phi_{(-3\alpha,3\alpha)}(U_i \setminus \hspace{-0.8cm}\bigcup_{\substack{j<i, g\in G\\ \Phi_{[-2\alpha,2\alpha]} g\overline{B_j}\cap U_i\neq \emptyset} }\hspace{-0.8cm}\Phi_{(-3\alpha,3\alpha)}g\overline{B_j}))\setminus\bigcup_{j< i ,g\in G}\Phi_{(-3\alpha,3\alpha)}(gB_j)\\
&\subseteq &(A_i \setminus (U_i \setminus \hspace{-0.8cm}\bigcup_{\substack{j<i, g\in G\\ \Phi_{[-2\alpha,2\alpha]} g\overline{B_j}\cap U_i\neq \emptyset} }\hspace{-0.8cm}\Phi_{(-3\alpha,3\alpha)}g\overline{B_j}))\setminus\bigcup_{j< i ,g\in G}\Phi_{(-3\alpha,3\alpha)}(gB_j)\\
&\subseteq&(A_i\cap (\bigcup_{j<i, g\in G}\Phi_{(-3\alpha,3\alpha)}g\overline{B_j}))\setminus\bigcup_{j< i ,g\in G}\Phi_{(-3\alpha,3\alpha)}(gB_j)\\
&\subseteq &A_i\cap \bigcup_{j<i, g\in G}\Phi_{(-3\alpha,3\alpha)}g(\overline{B_j}\setminus B_j)\\
&\subseteq &\bigcup_{j<i, g\in G}A_i\cap \Phi_{[-3\alpha,3\alpha]}g\partial_{S_j}{B_j}.\\
\end{eqnarray*}
The first and second inclusion comes from removing a smaller set. The first equality is given by removing first $\Phi_{(-3\alpha,3\alpha)}gB_i$ and inserting the definition of $B_i$. The third inclusion follows from the fact that $A_i\subseteq U_i$.  The last two inclusions are obvious.

The set $A_i\cap \Phi_{[-3\alpha,3\alpha]}g\partial_{S_j} B_j$ is homeomorphic to a compact subset of $A'\cap\partial_{S_j} gB_j=g(A'_j\cap\partial_{S_j} B_j)$ and thus its inductive dimension is at most $k-1$. So $A_i \setminus \bigcup_{j\in \mathbbm{N},g\in G}\Phi_{(-3\alpha,3\alpha)}(gB_j)$ embeds into a space of inductive dimension at most $k-1$. Thus we have shown \eqref{lem:thelemma1}.

To show \eqref{lem:thelemma2} we first want to show that the collection
\[\calc\coloneqq \{\Phi_{[-\alpha,\alpha]}gB_i\mid g\in G,i\in \mathbbm{N}\}\]
consists of pairwise disjoint sets. Since $B_j$ is a $G_{S_j}$-invariant subset of the central slice of a box of length $2\alpha$, we know that $g\Phi_{[-\alpha,\alpha]}B_j\cap g'\Phi_{[-\alpha,\alpha]}B_j\neq \emptyset$ if and only if $gg'^{-1}\in G_{S_j}$, in which case the two sets are equal. 
By definition of $B_i$ we have that for $j<i$
\[g\Phi_{[-\alpha,\alpha]}B_i\cap g'\Phi_{[-\alpha,\alpha]}B_j= \emptyset.\]
Thus the collection $\calc$ consists of pairwise disjoint sets. Now let us consider the collection
\[\calc'=\{\Phi_{[-4\alpha,4\alpha]}gB_i\mid g\in G,i\in \mathbbm{N}\}.\]
If $x$ is contained in $\Phi_{[-4\alpha,4\alpha]}gB_i$ then there is a $\beta\in \{-4\alpha,-2\alpha,0\alpha,2\alpha,4\alpha\}$ such that $\Phi_\beta(x)\in \Phi_{[-\alpha,\alpha]}gB_i$ and thus $x$ is contained in at most $5$ sets of $\calc'$.
\end{proof}

\begin{theorem}\label{thm:long} For every $\alpha,\delta>0$ there is a $\Fin$-cover of $X_{>20\alpha}$ of dimension at most $5(\ind(X)+1)$  with the following property.
For every point $x\in X_{>20\alpha}$ there is an open set in this cover containing $\Phi_{[-\alpha,\alpha]}(x)$ and for every open set $U$ in this cover there is an element $x\in X$ with $U\subseteq B_\delta(\Phi_\bbR(x))$.
\end{theorem}

\begin{proof} Let $\gamma\coloneqq 20\alpha$. First consider the collection of subsets $\mathfrak{A}_0\coloneqq \{\mathring{S}_i \mid i\in \mathbbm{N}\}$ as in \cref{lem:locfinboxes}. We can find a sequence $\varepsilon_i>0$ such that $A_i^0\coloneqq \{x\in S_i\mid d(x,\partial S_i)\ge \varepsilon_i\}$ has the property that
\[\{\Phi_{[-2\alpha,2\alpha]}(gA_i^0)\mid g\in G,i\in \mathbbm{N}\}\]
still covers the whole of $X_{>\gamma}$. Note that $A_i^0$ is a compact, $G_{S_i}$-invariant subset of $\mathring{S}_i$ of inductive dimension at most $\ind(X)$.

By Lemma~\ref{lem:thelemma} we can find a collection $B_i^0\subseteq \mathring{S}_i$ as in the lemma. Define new compact subsets 
\[A_i^1\coloneqq  A_i^0\setminus \bigcup_{j\in \mathbbm{N},g\in G}\Phi_{(-3\alpha,3\alpha)}(gB_j^0)\]
and iterate the process. Note that $A_i^k=\emptyset$ for $k>\ind(X)$ since its dimension is $-1$.
By \cref{lem:thelemma}~\eqref{lem:thelemma1} we know that any point in $X_{>\gamma}$ is contained in an open set of the form
$\Phi_{(-3\alpha,3\alpha)}(gB_i^k)$ for some $g\in G,i\in \mathbbm{N},k\in 0,\ldots,\ind(X)$.
Now consider the collection
\[\calb\coloneqq \{\Phi_{(-4\alpha,4\alpha)}B_i^k\mid i\in \mathbbm{N},k\in 0,\ldots, \ind(X)\}.\]
Thus for every point $x$ we can find an open set $U\in \calb$ with $\Phi_{[-\alpha,\alpha]}(x)\in U$.
Every point is contained in at most $5\cdot (\ind(X)+1)$ sets of this collection by \cref{lem:thelemma}~\eqref{lem:thelemma2}. By construction each element in $\calb$ is a subset of $\Phi_{[-4\alpha,4\alpha]}(S_i)$ and in particular contained in $B_\delta(\Phi_{[-10\alpha,10\alpha]}(x))$ for some $x\in X$ by \cref{lem:locfinboxes}~\eqref{lem:locfinboxes:a}.
\end{proof}

\section{Axes with bounded G-period}\label{sec:axes}
In this section we will construct the cover for $X_{\axes,\gamma}$. From now on we will fix $\gamma>0$ and omit it from the notation.

\begin{lemma}\label{lem:4.10}
The $G$-subspace $X_{\axes}\subseteq X$ is
\begin{enumerate}
\item \label{4.10i}closed,
\item \label{4.10iii}second-countable and
\item \label{4.10iv}locally compact.
\end{enumerate}
\end{lemma}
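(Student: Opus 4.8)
The plan is to observe first that the second-countability and local compactness assertions are formal consequences of closedness, and then to concentrate on showing that $X_{\axes}$ is closed. Indeed, $X_{\axes}$ is a subspace of the second-countable space $X$, hence itself second-countable; and once we know $X_{\axes}$ is closed in $X$, it is locally compact because a closed subspace of a locally compact (Hausdorff) space is locally compact. So the whole content of the lemma is that $X_{\axes}$ is closed, and for this only the metric structure of $X$ and properness of the $G$-action will be needed.

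To prove closedness I would take a sequence $x_n \in X_{\axes}$ converging to a point $x \in X$ and show $x \in X_{\axes}$. For each $n$ choose $g_n \in G$ and $t_n \in (0,\gamma]$ with $\Phi_{t_n}(x_n) = g_n x_n$ and $x_n \notin X'_{cyc}$. Passing to a subsequence, assume $t_n \to t \in [0,\gamma]$; then $g_n x_n = \Phi_{t_n}(x_n) \to \Phi_t(x)$ by continuity of the flow, so the set
\[ K := \{x_n : n \in \bbN\} \cup \{g_n x_n : n \in \bbN\} \cup \{x, \Phi_t(x)\} \]
is compact (two convergent sequences together with their limits). Since $g_n x_n \in K \cap g_n K$ for every $n$, properness of the $G$-action shows $\{g_n : n \in \bbN\}$ is finite, so after a further subsequence $g_n$ equals a fixed $g \in G$; letting $n \to \infty$ in $\Phi_{t_n}(x_n) = g x_n$ yields $\Phi_t(x) = g x$.

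It then remains to check three things: (i) $g$ has infinite order, since $g^m = e$ with $m \ge 1$ would give $\Phi_{mt_n}(x_n) = x_n$ with $m t_n > 0$, forcing $x_n \in X'_{cyc}$; (ii) $t > 0$, since $t = 0$ would give $g x = \Phi_t(x) = x$, i.e. $g$ lies in the point stabilizer $G_x$, which is finite, contradicting (i); and (iii) $x \notin X'_{cyc}$. For (iii), if $x$ were a flow fixed point then again $g x = \Phi_t(x) = x$ contradicts (i); otherwise $\{s \in \bbR : \Phi_s(x) = x\}$ is a nontrivial proper closed subgroup of $\bbR$, hence of the form $s_0 \bbZ$ with $s_0 > 0$, so the flow orbit $O$ of $x$ is a compact circle. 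Since $g$ commutes with the flow and $g x = \Phi_t(x) \in O$, the set $gO$ is the flow orbit of $gx$, namely $O$, so $\langle g \rangle \subseteq \{h \in G : hO = O\} \subseteq \{h \in G : hO \cap O \neq \emptyset\}$, which is finite by properness applied to the compact set $O$ — again contradicting (i). Having ruled all this out, $x \notin X'_{cyc}$, $t \in (0,\gamma]$, and $\Phi_t(x) = g x$, which is exactly the statement that $x$ is an axis of $g$ with $l(g,x) = t \leq \gamma$; thus $x \in X_{\axes}$.

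The one step I expect to require genuine care is extracting compactness so that properness can be invoked: a priori the elements $g_n$ could leave every finite set, and one must bundle the points $x_n$ and $g_n x_n$ into a single compact set — which is what forces the preliminary choice of a subsequence with $t_n \to t$, so that $\Phi_{t_n}(x_n)$ converges. Everything after that is routine, resting only on the classification of closed subgroups of $\bbR$ and on the finiteness of stabilizers of points and of compact sets under a proper action.
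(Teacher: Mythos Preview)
Your proof is correct and follows essentially the same approach as the paper: pass to a subsequence with $t_n \to t$, use properness of the $G$-action (the paper packages this as \cref{lem:4.6}) to make $g_n \equiv g$ constant, and conclude $\Phi_t(x) = gx$ with $t>0$. Your argument is in fact more careful than the paper's, which asserts that $g$ has infinite order without justification and never explicitly verifies that the limit lies outside $X'_{cyc}$; your steps (i) and (iii) fill exactly these gaps.
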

First we need to prove that \cite[Lemma 4.6, Corollary 4.7]{flow} still hold without the assumption that the metric space is proper.
\begin{lemma}
\label{lem:4.6}
Let $(Z,d)$ be a metric space with a proper isometric $G$-action. If $(z_n)_{n\in\bbN}$ and $(g_n)_{n\in\bbN}$ are sequences in $Z$ and $G$ such that $z_n$ converges to $z\in Z$ and $g_nz_n$ converges to $z'\in Z$, then $\{g_n\mid n\in\bbN\}$ is finite and for every $g\in G$ such that $g_n=g$ for infinitely many $n\in\bbN$ we have $gz=z'$.
\end{lemma}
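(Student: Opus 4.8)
The statement to prove is \cref{lem:4.6}: for a metric space $(Z,d)$ with a proper isometric $G$-action, if $z_n\to z$ and $g_nz_n\to z'$, then $\{g_n\}$ is finite and any $g$ occurring infinitely often satisfies $gz=z'$.

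\textbf{Proof proposal.}

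The plan is to mimic the argument already used in the proof of \cref{prop:sep}, but without assuming the space is proper as a metric space (i.e.\ without assuming closed balls are compact); only properness of the group action is available. First I would assume for contradiction that $\{g_n\mid n\in\bbN\}$ is infinite; after passing to a subsequence we may assume the $g_n$ are pairwise distinct. The natural candidate for a compact set witnessing non-properness is
\[
K\coloneqq \{z_n\mid n\in\bbN\}\cup\{z\}\cup\{g_nz_n\mid n\in\bbN\}\cup\{z'\},
\]
which is compact since it is a convergent sequence together with its limit, unioned with another such. Then $g_nz_n\in K$ and $z_n\in K$, so $g_nK\cap K\neq\emptyset$ for every $n$, and since the $g_n$ are distinct this contradicts properness of the $G$-action. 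Hence $\{g_n\}$ is finite.

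For the second assertion, fix $g\in G$ with $g_n=g$ for infinitely many $n$, and restrict to that subsequence, so $g_n=g$ for all $n$ along it. Then $gz_n=g_nz_n\to z'$. On the other hand, $z_n\to z$ and $g$ acts by an isometry (in particular continuously), so $gz_n\to gz$. Since limits in a metric space are unique, $gz=z'$.

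The only genuine point requiring care — and the main (mild) obstacle — is verifying that $K$ is compact without any properness hypothesis on $Z$ itself: one should note that a sequence together with its limit point is compact (any open cover contains a set containing the limit, which then contains all but finitely many terms), and that a finite union of compact sets is compact; both facts hold in an arbitrary topological space and in particular a metric space, so no completeness or local compactness of $Z$ is needed. Everything else is the standard contradiction with the definition of proper action ($\{g\in G\mid gK\cap K\neq\emptyset\}$ finite for $K$ compact) and continuity of isometries. This also makes clear why the proof goes through verbatim in the non-proper-metric-space setting, which is exactly the point of restating \cite[Lemma~4.6]{flow} here.
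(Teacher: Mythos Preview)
Your proof is correct and follows essentially the same approach as the paper: define the compact set $K=\{z_n\}\cup\{g_nz_n\}\cup\{z,z'\}$, use properness of the action to conclude $\{g_n\}$ is finite, and then use continuity of $g$ on the constant subsequence to get $gz=z'$. The only difference is that you frame the first step as a contradiction argument and spell out why $K$ is compact, whereas the paper states this directly in one line.
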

\begin{proof}
Define $K:=\{z_n\}\cup\{g_nz_n\}\cup\{z,z'\}$. Then $K$ is compact and $g_nK\cap K\neq \emptyset$, thus the set $\{g_n\mid n\in\bbN\}$ is finite. If $g_n=g$ for infinitely many $n\in\bbN$, then $z'=\lim_{n\to\infty}g_nz_n=\lim_{n\to\infty}gz_n=gz$. 
\end{proof}
\begin{cor}
\label{cor:4.7}
Let $(Z,d)$ be a metric space with a proper isometric $G$-action. If $L\subseteq Z$ is compact, then $HL\subseteq Z$ is closed for any subset $H\subseteq G$. 
\end{cor}
\begin{proof}
Let $h_nz_n$ be converging to $z$ with $h_n\in H,z_n\in L$. After passing to a subsequence $z_n$ converges to $z'\in L$ and by \cref{lem:4.6} we can pass to a further subsequence with $h_n\equiv h$. Thus $z=hz'\in HL$.
\end{proof}
\begin{proof}[Proof of \cref{lem:4.10}]
\eqref{4.10i} Let $c_n\in X_{\axes}$ be a sequence that converges to $c\in X$. There are $g_n\in G,t_n\in(0,\gamma]$ such that $g_nc_n=\Phi_{t_n}c_n$. We can pass to a subsequence and assume that $t_n$ converges to $t$. Then $g_nc_n=\Phi_{t_n}c_n$ converges to $\Phi_{t}c$. Since $G$ acts properly and isometrically on $X$ we can apply \cref{lem:4.6} and assume after passing to a subsequence that $g_n=g$ is constant. We have \[gc=\lim g_nc_n=\lim\Phi_{t_n}c_n=\Phi_{t}c.\] Since the group action is proper and $g$ has infinite order, $t$ can not be zero. 
\eqref{4.10iii} Subspaces of second-countable spaces are again second-countable. 
\eqref{4.10iv} Closed subspaces of locally compact spaces are again locally compact.
\end{proof}

\begin{lemma}\label{lem:flowlinesclosed}If there are $g\in G,0<t\in\bbR$ with $\Phi_tc=gc$, then $\Phi_{\mathbbm{R}}c$ is closed.
\end{lemma}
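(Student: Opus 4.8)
The plan is to show that the flow line $\Phi_{\mathbbm{R}}c$ is closed by exploiting the relation $\Phi_t c = gc$ to reduce the whole flow orbit to a bounded piece together with the action of the cyclic group $\langle g\rangle$, and then to invoke \cref{cor:4.7}. Concretely, iterating the relation gives $\Phi_{nt}c = g^n c$ for all $n\in\mathbbm{Z}$, so for an arbitrary real number $s$, writing $s = nt + r$ with $n\in\mathbbm{Z}$ and $r\in[0,t]$, we get $\Phi_s c = \Phi_r(\Phi_{nt}c) = \Phi_r(g^n c) = g^n \Phi_r c$. Hence
\[
\Phi_{\mathbbm{R}}c = \bigcup_{n\in\mathbbm{Z}} g^n\,\Phi_{[0,t]}(c) = \langle g\rangle \cdot L, \qquad L\coloneqq \Phi_{[0,t]}(c).
\]
Here I use that the flow is continuous, so $L$ is the continuous image of the compact interval $[0,t]$ and is therefore compact.

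Now $\langle g\rangle\subseteq G$ is a subgroup and $L\subseteq X$ is compact, so \cref{cor:4.7} (applied to the metric space $X$ with its proper isometric $G$-action, which is part of the definition of a flow space) immediately gives that $\langle g\rangle L = \Phi_{\mathbbm{R}}c$ is closed in $X$. That finishes the argument.

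I expect the only subtle point to be the bookkeeping that turns $\Phi_{\mathbbm{R}}c$ into $\langle g\rangle L$ — in particular checking that the identity $\Phi_s c = g^n\Phi_r c$ really covers every $s\in\mathbbm{R}$ and that one may take the closed interval $[0,t]$ (so the endpoints overlap between consecutive translates, which is harmless). Everything else is a direct citation: compactness of $L$ from continuity of the flow, and closedness of $\langle g\rangle L$ from \cref{cor:4.7}. No genuine obstacle arises; the lemma is essentially a packaging of \cref{cor:4.7} once one observes that a flow line with a $(g,t)$-periodicity is a single orbit of the cyclic group $\langle g\rangle$ on a compact set.
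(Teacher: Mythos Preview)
Your proof is correct and essentially identical to the paper's: both write $\Phi_{\mathbbm{R}}c=\bigcup_{n\in\mathbbm{Z}}g^n\Phi_{[0,t]}c$, note that $\Phi_{[0,t]}c$ is compact, and apply \cref{cor:4.7} with the subgroup $\langle g\rangle$. You supply slightly more detail on the decomposition $s=nt+r$, but the argument is the same.
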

\begin{proof}
We have $\Phi_\mathbbm{R}c=\bigcup_{n\in \mathbbm{Z}}g^n\Phi_{[0,t]}c$. The group $\langle g\rangle $ also acts properly and isometrically. The set $\Phi_{[0,t]}c$ is compact. Hence $\Phi_\mathbbm{R}c$ is closed by \cref{cor:4.7}.
\end{proof}

\begin{lemma}
\label{lem:proper} The subspace $\{t\in\bbR\mid \Phi_t L\cap L'\neq\emptyset\}$ of $\bbR$ is compact
for every two compact subspaces $L,L'\subseteq X_{\axes}$.
\end{lemma}
\begin{proof}
Since this set is closed, it suffices to show that it is bounded. Furthermore it is a subset of $\{t\in\bbR\mid \Phi_t (L\cup L')\cap (L\cup L')\neq\emptyset\}$ and thus it suffices to consider the case $L'=L\neq \emptyset$. In this case the set is symmetric at $0$ and thus it suffices to find an upper bound.

The set 
\[S=\{g\in G\mid  gL\cap \Phi_{[-\gamma,\gamma]}L\neq \emptyset\}\]
is finite since the group action is proper. Furthermore we have $S=S^{-1}$. 
Since every point in $L$ is an axis for some group element, the set $S$ contains at least one element of infinite order. Let $m$ be the maximal integer such that there is an element $g\in S$ of infinite order with $g^m\in S$.
Now let $t\ge 0$ be given such that there is an $x\in L$ with $\Phi_t(x)\in L$. Since $x$ is an axis, we can find a $g\in G$ with $gx=\Phi_{l(g,x)}x$ for $0<l(g,x)\le \gamma$. Hence $g$ has infinite order and $g\in S$. Now write $t$ in the form $t=m'l(g,x)+r$ with $m'\in \IZ,r\in [0,l(g,x)]$. By assumption $\Phi_t(x)\in L$. Furthermore we have
\[\Phi_t(x)=g^{m'}\Phi_r(x)\in g^{m'} \Phi_{[-\gamma,\gamma]}(L).\]
Thus $g^{-m'}\in S$ and by symmetry we have $g^{m'}\in S$. Hence $m'\le m$ and thus 
\[t=m'l(g,x)+r\le (m'+1)\gamma\le (m+1)\gamma.\qedhere\]
\end{proof}

\begin{lemma}
\label{lem:metrizable}
The space $Y_{\axes}$ is locally compact and metrizable.
\end{lemma}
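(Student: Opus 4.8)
The plan is to derive the statement from \cref{lem:proper}, which says exactly that the flow action of $\bbR$ on $X_{\axes}$ is proper. Recall from \cref{lem:4.10} that $X_{\axes}$ is second-countable, locally compact and, as a subspace of a metric space, metrizable; fix a metric $d$ on it. The quotient map $p\colon X_{\axes}\to Y_{\axes}$ is open, because for open $U\subseteq X_{\axes}$ one has $p^{-1}(p(U))=\bigcup_{t\in\bbR}\Phi_t(U)$, a union of open sets. An open continuous surjection carries a countable basis $\{B_n\}$ of the source to a countable basis $\{p(B_n)\}$ of the target, and carries a compact neighborhood $K$ of a point to a compact neighborhood $p(K)$ of its image (since $p(K^\circ)$ is open). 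Hence $Y_{\axes}$ is automatically second-countable and locally compact, and by Urysohn's metrization theorem it then remains only to prove that $Y_{\axes}$ is Hausdorff: once we know this, $Y_{\axes}$ is a locally compact Hausdorff space, hence regular, hence a second-countable regular space, hence metrizable.

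To prove Hausdorffness, take $x,y\in X_{\axes}$ with $p(x)\neq p(y)$, i.e. $y\notin\Phi_{\bbR}(x)$. Using local compactness, choose a compact neighborhood $L$ of $x$ and a compact neighborhood $L'$ of $y$, so that $B_{1/n}(x)\subseteq L$ and $B_{1/n}(y)\subseteq L'$ for all sufficiently large $n$. By \cref{lem:proper} the set $T\coloneqq\{t\in\bbR\mid\Phi_t L\cap L'\neq\emptyset\}$ is compact. I claim there are open neighborhoods $U\subseteq L$ of $x$ and $V\subseteq L'$ of $y$ with $\Phi_{\bbR}(U)\cap\Phi_{\bbR}(V)=\emptyset$; granting this, $p(U)$ and $p(V)$ are disjoint open neighborhoods of $p(x)$ and $p(y)$, and we are done. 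If no such $U,V$ existed, then in particular $\Phi_{\bbR}(B_{1/n}(x))\cap\Phi_{\bbR}(B_{1/n}(y))\neq\emptyset$ for all large $n$, so there are $x_n\in B_{1/n}(x)$, $y_n\in B_{1/n}(y)$ and $t_n\in\bbR$ with $\Phi_{t_n}(x_n)=y_n$. Since $x_n\in L$ and $y_n\in L'$ we have $t_n\in T$, so after passing to a subsequence $t_n\to t$; joint continuity of the flow then gives $y=\lim_n y_n=\lim_n\Phi_{t_n}(x_n)=\Phi_t(x)$, contradicting $y\notin\Phi_{\bbR}(x)$.

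The only place the hypotheses are genuinely used is this separation argument, and the essential input is the compactness of $T$ coming from \cref{lem:proper}; I expect no other step to present difficulty, since second-countability, local compactness and the openness of $p$ are formal. One cosmetic point to watch is that $X_{\axes}$ need not be a proper metric space, so closed balls are not available as compact sets and one must really invoke local compactness to produce the compact neighborhoods $L,L'$ (and the corresponding compact neighborhoods in $Y_{\axes}$).
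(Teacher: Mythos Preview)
Your proof is correct and follows the same overall architecture as the paper's: openness of $p$, second-countability and local compactness of the quotient, and Urysohn's metrization theorem, with \cref{lem:proper} supplying the crucial separation input. The one genuine difference is in which separation axiom you verify directly. The paper proves regularity of $Y_{\axes}$ head-on: it first uses \cref{lem:flowlinesclosed} to see that points are closed, and then shows (via a claim whose proof invokes \cref{lem:proper}) that $p(\overline{B_{\epsilon/2}(c)})$ is closed, so that a point and a closed set can be separated. You instead prove only Hausdorffness---a shorter sequential argument, again driven by \cref{lem:proper}---and then harvest regularity for free from local compactness. Your route is a mild streamlining: it avoids the separate appeal to \cref{lem:flowlinesclosed} and replaces the ``image of a compact ball is closed'' claim by a direct two-point separation. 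Conversely, the paper's version yields the slightly stronger intermediate fact that images of small compact balls are closed, which is reused verbatim in the proof of \cref{lem:Gmetrizable}.
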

\begin{proof}
The space $X_{\axes}$ is second-countable by \cref{lem:4.10}. The quotient map $p\colon X_{\axes}\to Y_{\axes}$ is open, because it is the quotient by the action of the group $\bbR$.
Let $y$ be a point in $Y_{\axes}$ and $c\in X_{\axes}$ be a preimage. Let $U$ be an open neighborhood of $y$. Let $L\subseteq p^{-1}(U)$ be a compact neighborhood of $c$ and since $p$ is continuous and open $p(L)$ is a compact neighborhood of $y$. Thus $Y_{\axes}$ is locally compact.

Points in $Y_{\axes}$ are closed, since $\Phi_\bbR c\subseteq X$ is closed for every $c\in X_{\axes}$ by \cref{lem:flowlinesclosed}. For a closed subset $A\subseteq Y_{\axes}$ and $p(c)\notin A$ there is an $\epsilon>0$ such that $\overline{B_\epsilon(c)}$ is compact and $B_\epsilon(c)\cap p^{-1}(A)=\emptyset$ and thus also $p(B_\epsilon(c))\cap A=\emptyset$. 
\begin{claim}
The set $B\coloneqq p(\overline{B_{\epsilon/2}(c)})$ is closed.
\end{claim}
Then the complement of $B$ is an open neighborhood of $A$ and it is disjoint from the open neighborhood $p(B_{\epsilon/2}(c))$ of $p(c)$. Hence, $Y_{\axes}$ is regular. $Y_{\axes}$ is second-countable since it is a quotient of a subspace of a second-countable space.
By Urysohn's metrization theorem \cite[Theorem~34.1]{munkres} the quotient $Y_{\axes}$ is metrizable.

It remains to prove the claim: Let $c_i$ be a sequence in $p^{-1}(p(\overline{B_{\epsilon/2}(c)}))$ which is converging to $c'$ in $X_{\axes}$. Let $t_n\in\bbR$ be such that $\Phi_{t_n}c_n\in \overline{B_{\epsilon/2}(c)}$. Let $\delta>0$ be such that $\overline{B_\delta(c')}$ is compact and $N\in\bbN$ be such that $d(c_n,c')<\delta$ for all $n\geq N$. By \cref{lem:proper} there is a $t_0>0$ such that $\Phi_t(\overline{B_{\delta}(c')})\cap \overline{B_{\epsilon/2}(c)}=\emptyset$ for all $|t|>|t_0|$ and thus $|t_n|\leq|t_0|$ for all $n\geq N$. Therefore, there is a subsequence $t_n$ converging to $t'\in\bbR$. Thus for $n$ large enough we have $\Phi_{t'}c_n\in\Phi_{[-\epsilon,\epsilon]}(\overline{B_{\epsilon/2}(c)})$, which is compact. So also the limit $\Phi_{t'}c'=\lim_{n\to\infty}\Phi_{t'}c_n$ lies in $\Phi_{[-\epsilon,\epsilon]}(\overline{B_{\epsilon/2}(c)})$ and thus $c'$ is an element of $p^{-1}p(\overline{B_{\epsilon/2}(c)})$.
\end{proof}
\begin{prop}\label{prop:4.13}
Let $\delta>0$ be given. There is an open $\Phi$-invariant $\VCyc$-cover $\mcU$ of $X_{\axes}$ whose dimension is at most $\dim(X)$ and for each $U\in\mcU$ there exists $x\in X$ with $U\subseteq B_\delta(\Phi_\bbR(x))$.
\end{prop}
To prove this we need the following lemmas.
\begin{lemma}
\label{lem:discrete}
For all $y\in Y_{\axes}$ the stabilizer $G_y\coloneqq \{g\in G\mid gy=y\}$ is virtually cyclic of type I and $Gy\subseteq Y_{\axes}$ is closed and discrete.
\end{lemma}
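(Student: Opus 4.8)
The plan is to pull the whole situation back to a flow line in $X_{\axes}$ lying over $y$.

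First I would fix $c\in X_{\axes}$ with $p(c)=y$. A group element $g$ fixes $y$ exactly when $gc$ lies on the flow line $\Phi_{\bbR}c$, and since $c\notin X'_{cyc}$ this flow line is not periodic, so there is then a \emph{unique} $t$ with $gc=\Phi_t c$; write $\phi(g)\coloneqq t$. Because the $G$- and $\bbR$-actions on $X$ commute, $\phi\colon G_y\to\bbR$ is a homomorphism: $(gh)c=g\Phi_{\phi(h)}c=\Phi_{\phi(h)}(gc)=\Phi_{\phi(g)+\phi(h)}c$. Its kernel is the point stabilizer $G_c=\{g\in G\mid gc=c\}$, which is finite since the $G$-action on $X$ is proper.

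Next I would show that $\im\phi$ is a discrete subgroup of $\bbR$. If $a_n\in G_y$ had $\phi(a_n)\to 0$ with $\phi(a_n)\neq 0$, then $a_nc=\Phi_{\phi(a_n)}(c)\to c$ while the constant sequence $c$ converges to $c$, so \cref{lem:4.6} (for the proper isometric $G$-action on $X$) would force $\{a_n\}$ to be finite --- impossible, since $\{\phi(a_n)\}$ is then infinite. As $c$ is an axis there are $g\in G$ and $t>0$ with $gc=\Phi_tc$, so $\phi(g)=t\neq0$; hence $\im\phi=l_0\bbZ$ for some $l_0>0$, and we may fix $h\in G_y$ with $\phi(h)=l_0$. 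Then $h$ has infinite order (as $\phi(h^n)=nl_0\neq0$ for $n\neq0$), and from $\phi(g)=\phi(h^k)$ one gets $gh^{-k}\in G_c$ for every $g\in G_y$, so $\langle h\rangle\cong\bbZ$ has index at most $|G_c|$ in $G_y$; together with the surjection $\phi\colon G_y\to l_0\bbZ\cong\bbZ$ this shows $G_y$ is virtually cyclic of type I.

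For the orbit, I would use that $Y_{\axes}$ is metrizable by \cref{lem:metrizable}, so it suffices to show that no sequence of pairwise distinct points of $Gy$ converges in $Y_{\axes}$; this yields closed and discrete at once. Suppose $g_ny\to y^*\in Y_{\axes}$ with the $g_ny$ pairwise distinct, pick $c^*\in X_{\axes}$ over $y^*$, and choose $\epsilon>0$ with $\overline{B_\epsilon(c^*)}$ compact (using \cref{lem:4.10}). Since $p$ is open (being the quotient by a group action), $p(B_\epsilon(c^*))$ is a neighbourhood of $y^*$, so for large $n$ there is $u_n\in\bbR$ with $x_n\coloneqq\Phi_{u_n}(g_nc)\in\overline{B_\epsilon(c^*)}$. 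Now replace $g_n$ by $g_n'\coloneqq g_nh^{-m_n}$, which still satisfies $g_n'y=g_ny$ because $h\in G_y$, with $m_n\in\bbZ$ chosen so that the exponent below lies in $[0,l_0]$: $g_n'c=\Phi_{-m_nl_0}(g_nc)=\Phi_{-m_nl_0-u_n}(x_n)\in\Phi_{[0,l_0]}(\overline{B_\epsilon(c^*)})$. This last set is compact, so $K\coloneqq\Phi_{[0,l_0]}(\overline{B_\epsilon(c^*)})\cup\{c\}$ is compact; then $g_n'c\in K$ and $c\in K$, hence $g_n'K\cap K\neq\emptyset$, so by properness of the $G$-action on $X$ the set $\{g_n'\mid n\in\bbN\}$ is finite --- contradicting that the $g_n'y=g_ny$ are pairwise distinct. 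Therefore $Gy$ is closed and discrete.

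The bookkeeping for $G_y$ is routine; the real point is the orbit statement, and the obstruction is that $G$ does \emph{not} act properly on $Y_{\axes}$ (the stabilizers are infinite), so properness cannot be applied downstairs. The resolution is precisely the lift-and-shift step above: moving within the $G_y$-coset of $g_n$ by a power of $h$ absorbs the a priori unbounded flow parameter $u_n$ and traps $g_n'c$ in a fixed compact subset of $X$, after which the genuine properness of $G$ on $X$ finishes the argument. I would also be a little careful about the standard passages to pairwise distinct subsequences, which are legitimate because $Y_{\axes}$ is first countable.
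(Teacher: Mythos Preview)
Your proof is correct and follows essentially the same approach as the paper's: both build the homomorphism $G_y\to\bbR$ with finite kernel and discrete nontrivial image for the first claim, and both use the identical ``lift-and-shift'' trick (replacing $g_n$ by $g_n h^{-m_n}$ to trap the lifted points in a fixed compact set of $X$, then invoking properness) for the orbit statement. The only cosmetic difference is that you argue in $Y_{\axes}$ via \cref{lem:metrizable} and a contradiction on pairwise distinct sequences, whereas the paper works directly with sequences in $p^{-1}(Gy)\subseteq X_{\axes}$ and deduces closedness and discreteness of $Gy$ from there.
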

\begin{proof}
The space $y= \Phi_\bbR(c)\cong \bbR$ is a closed $G_y$-invariant subspace of $X$. Thus the group action of $G_y$ on $y$ is proper.  Furthermore we have a homomorphism $G_y\to \mathbbm{R},\; g\mapsto l(g,y)$, where we set $l(g,y)\coloneqq 0$ if $gy=y$. 
Since $c$ is not fixed under the flow, we can find a small $\varepsilon$ such that $\Phi_t(c)\notin Gc$ for all $t\in (0,\varepsilon)$ and thus the image of this homomorphism is discrete. It is nontrivial since $c$ is $G$-periodic. Thus it must be infinite cyclic.
Since the $G$-action on $X$ is proper the kernel of this map is finite. Hence $G_y$ is virtually cyclic of type $I$.

Next suppose that we have a sequence $g_i\Phi_{t_i}(c)\in p^{-1}(Gy)$ with $g_i\in G$ that converges to some $c'\in X_{\axes}$. 
Pick $g\in G$ such that $c$ is an axis for $g$ with $l(g,c)\leq \gamma$. Then by replacing $g_i$ by $g_ig^{m_i}$ for some $m_i\in\bbZ$ we can assume $t_i\in[0,\gamma]$. We have $g_ic\in \Phi_{[-\gamma,0]}B_1(c')=:L$ for $n$ big enough. Since the action is proper and $L$ is compact we can find a subsequence with $g_i\equiv h$. Furthermore we can pick a subsequence such that $\lim_{i\in \bbN}t_i$ exists. Thus $h\Phi_{t_i}c$ converges to $c'$ and $c'=\Phi_{\lim_{i\in\bbN}t_i}hc\in p^{-1}(Gy)$. 
Therefore, $Gy$ is closed. And since we can always find a subsequence with $g_i\equiv h$ every converging sequence in $Gy$ already contains its limit point infinitely often. This implies that $Gy$ is discrete.
\end{proof}

\begin{lemma}
\label{lem:Gmetrizable}
The space $G\backslash Y_{\axes}$ is locally compact and metrizable.
\end{lemma}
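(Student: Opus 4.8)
The plan is to imitate the proof of \cref{lem:metrizable}, carried one stage further, with the combined $G\times\bbR$-action. The relevant map is the quotient $q\colon X_{\axes}\to G\backslash Y_{\axes}$, which factors as the composite of $p\colon X_{\axes}\to Y_{\axes}$ and the quotient $Y_{\axes}\to G\backslash Y_{\axes}$; both factors are quotients by group actions and hence open, so $q$ is an open continuous surjection — in particular a quotient map — and its fibres are the $G\times\bbR$-orbits, $q^{-1}(q(x))=G\Phi_{\bbR}(x)$. By \cref{lem:4.10} the space $X_{\axes}$ is second-countable and locally compact, and since $q$ is open and surjective the same holds for $G\backslash Y_{\axes}$ (a countable basis of $X_{\axes}$ pushes forward to a countable basis, and the $q$-image of a compact neighbourhood is a compact neighbourhood). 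It therefore remains to establish metrizability, and by Urysohn's metrization theorem \cite[Theorem~34.1]{munkres} it is enough to show that $G\backslash Y_{\axes}$ is regular.

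The crucial observation — the one that replaces the use of \cref{lem:proper} in the proof of \cref{lem:metrizable} — is that for every compact $L\subseteq X_{\axes}$ the saturation $q^{-1}(q(L))=G\Phi_{\bbR}(L)$ already has the form $GC$ for a compact set $C$. Indeed, by definition each $x\in X_{\axes}=X_{\axes,\gamma}$ is an axis for some $h\in G$ with $0<l(h,x)\le\gamma$, whence $h^{m}x=\Phi_{m l(h,x)}(x)$ for all $m\in\bbZ$; writing a given $t\in\bbR$ as $t=m l(h,x)+r$ with $r\in[0,\gamma]$ gives $g\Phi_{t}(x)=(gh^{m})\Phi_{r}(x)\in G\Phi_{[0,\gamma]}(L)$. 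Hence $G\Phi_{\bbR}(L)=G\Phi_{[0,\gamma]}(L)$, and the set $\Phi_{[0,\gamma]}(L)=\Phi([0,\gamma]\times L)$ is compact, so \cref{cor:4.7}, applied to the proper isometric $G$-action on the metric space $X_{\axes}$, shows that $q^{-1}(q(L))$ is closed in $X_{\axes}$. Since $q$ is a quotient map, $q(L)$ is closed in $G\backslash Y_{\axes}$.

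The remaining steps then parallel \cref{lem:metrizable}. Applying the observation with $L=\{c\}$ shows that one-point sets of $G\backslash Y_{\axes}$ are closed. For regularity, let $A\subseteq G\backslash Y_{\axes}$ be closed with $\bar y\notin A$ and choose $c\in X_{\axes}$ with $q(c)=\bar y$. Since $q^{-1}(A)$ is closed, does not contain $c$, and $X_{\axes}$ is locally compact and metrizable, we can pick $\epsilon>0$ such that $L\coloneqq\overline{B_{\epsilon}(c)}$ is compact and $L\cap q^{-1}(A)=\emptyset$. Then $q(L)$ is closed by the observation and $q(L)\cap A=\emptyset$ (a point in the intersection would lift into $L\cap q^{-1}(A)$), so $(G\backslash Y_{\axes})\setminus q(L)$ is an open neighbourhood of $A$ disjoint from the open neighbourhood $q(B_{\epsilon}(c))\subseteq q(L)$ of $\bar y$. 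This yields regularity, hence metrizability.

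The main obstacle is locating the observation of the second paragraph. The naive approach of proving that $G$ acts properly on $Y_{\axes}$ breaks down: by \cref{lem:discrete} the stabilizer of a point of $Y_{\axes}$ is typically infinite cyclic, so $Y_{\axes}$ carries no proper $G$-action, and one must descend all the way to $X_{\axes}$ and exploit the $G$-periodicity of axes to collapse each flow line to a compact arc modulo $G$. Everything else is routine point-set topology, essentially identical to the proof of \cref{lem:metrizable}.
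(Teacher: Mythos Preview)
Your proof is correct and follows essentially the same strategy as the paper's: both reduce metrizability to regularity via Urysohn, and both show that the image of a compact set under the quotient map is closed by using the axis property to rewrite the saturation as $G\Phi_{[0,\gamma]}(L)$. The only difference is organizational: you isolate the identity $G\Phi_{\bbR}(L)=G\Phi_{[0,\gamma]}(L)$ and then invoke \cref{cor:4.7}, whereas the paper unwinds the equivalent sequence argument directly in the proof of its claim (and separately cites \cref{lem:discrete} for closedness of points, which your observation with $L=\{c\}$ renders unnecessary).
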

\begin{proof}
The space $X_{\axes}$ is second-countable by \cref{lem:4.10}. The quotient map $p\colon X_{\axes}\to G\backslash Y_{\axes}$ is open, because it is the quotient by a group action. Let $y$ be a point in $ G\backslash Y_{\axes}$ and $c\in X_{\axes}$ be a preimage. Let $U$ be an open neighborhood of $y$ and let $L\subseteq p^{-1}(U)$ be a compact neighborhood of $c$. Since $p$ is continuous and open, $p(L)$ is a compact neighborhood of $y$. Thus $ G\backslash Y_{\axes}$ is locally compact.

Points in $G\backslash Y_{\axes}$ are closed, since $Gy\subseteq Y_{\axes}$ is closed for every $y\in Y_{\axes}$ by \cref{lem:discrete}. For a closed subset $A\subseteq G\backslash Y_{\axes}$ and $c\in X_{\axes}$ with $p(c)\notin A$ there is an $\epsilon>0$ such that $\overline{B_\epsilon(c)}$ is compact and $B_\epsilon(c)\cap p^{-1}(A)=\emptyset$ and thus also $p(B_\epsilon(c))\cap A=\emptyset$. 
\begin{claim}
The set $B\coloneqq p(\overline{B_{\epsilon/2}(c)})$ is closed.
\end{claim}
Then the complement of $B$ is an open neighborhood of $A$ and it is disjoint from the open neighborhood $p(B_{\epsilon/2}(c))$ of $p(c)$. Hence, $G\backslash Y_{\axes}$ is regular. $G\backslash Y_{\axes}$ is second-countable since it is a quotient of a subspace of a second-countable space.
By Urysohn's metrization theorem \cite[Theorem~34.1]{munkres} the quotient $G\backslash Y_{\axes}$ is metrizable.

It remains to prove the claim: 
Let $c_i$ be a sequence in $\overline{B_{\epsilon/2}(c)}$, $t_i\in\bbR,g_i\in G$ such that the sequence $g_i\Phi_{t_i}c_i$ is converging to $c'\in X_{\axes}$. Let $h_i\in G$ be given such that $c_i$ is an axis for $h_i$, then there are $m_i\in\bbZ, t_i'\in[0,\gamma]$ such that $g_i\Phi_{t_i}c_i=g_ih_i^{m_i}\Phi_{t'_i}c_i$, therefore we can assume $t_i\in[0,\gamma]$. Since $\overline{B_{\epsilon/2}(c)}$ and $[0,\gamma]$ are compact we can assume that $c_i$ converges to $k$ and $t_i$ converges to $t$.
By \cref{lem:4.6} there is $g\in G$ such that $c'=g\phi_tk\in p^{-1}(p(\overline{B_{\epsilon/2}(c)}))$. This proves the claim.
\end{proof}
The big difference to \cite{flow} is that there assumptions on the geometry are used to define a metric on $G\backslash Y_{\axes}$. Here we just use metrization theorems and thus get rid of those assumptions. 
\begin{lemma}
\label{lem:4.12}
We have $dim(G\backslash Y_{\axes})\leq \dim(X)$.
\end{lemma}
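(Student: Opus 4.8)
The plan is to realize $G\backslash Y_{\axes}$ as the orbit space of a \emph{periodic} flow whose periods are uniformly bounded by $\gamma$, and to read off the dimension bound from a local flow box. By \cref{lem:Gmetrizable} the space $G\backslash Y_{\axes}$ is second-countable, metrizable and locally compact, so its covering dimension equals its small inductive dimension (\cref{thm:compare}); passing from an open cover to a countable closed refinement and invoking \cref{thm:subspace} and \cref{thm:sum}, it suffices to equip every point of $G\backslash Y_{\axes}$ with an open neighborhood of inductive dimension at most $\dim X$.

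First I would set $Z\coloneqq G\backslash X_{\axes}$. The $G$-action on $X_{\axes}$ is proper and isometric (it is the restriction of the action on $X$, cf.\ \cref{lem:4.10}), so $Z$ is metrizable, locally compact and second-countable, carries the descended flow $\bar\Phi$ and, with the trivial group, constitutes a flow space. Since $Z$ is locally the quotient of an open subset of $X_{\axes}$ by the finite group $G_c$, and a finite group quotient does not raise the covering dimension of a metrizable space, \cref{thm:subspace} and \cref{thm:sum} give $\dim Z\le\dim X_{\axes}\le\dim X$. Every $\bar\Phi$-orbit is periodic: if $\Phi_l(c)=gc$ with $0<l=l(g,c)\le\gamma$ then $\bar\Phi_l(Gc)=Gc$. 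The minimal period of $Gc$ equals the $G$-period of $c$; it is positive because by \cref{lem:discrete} the image of $G_{\Phi_{\bbR}c}\to\bbR,\ h\mapsto l(h,\Phi_{\bbR}c)$, is discrete, and it is at most $\gamma$ since $c\in X_{\axes}\subseteq X_{\le\gamma}$. Finally $\bbR\backslash Z=G\backslash Y_{\axes}$.

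Now fix $\bar z\in G\backslash Y_{\axes}=\bbR\backslash Z$ and a lift $\bar c\in Z$ whose $\bar\Phi$-orbit is a circle of length $\rho\in(0,\gamma]$. Applying \cref{lemma:box} to the flow space $Z$, with $\rho/2$ in place of $\gamma$, produces a box $\bar B\subseteq Z$ of some length $l_B<\rho/2$ with $\bar c$ in the interior of its central slice $\bar S$; by \cref{lemma:homeo}, $\bar S$ is a transversal to $\bar\Phi$, and $\dim\bar S\le\dim Z\le\dim X$ by \cref{thm:subspace}. Let $\bar P$ be the first-return map of $\bar\Phi$ to $\bar S$, defined near $\bar c$; its return time tends to $\rho$ as the basepoint tends to $\bar c$, so near $\bar c$ each return step has length greater than $\rho/2$. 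Since the $\bar\Phi$-orbit through any point near $\bar c$ is a circle of length at most $\gamma$, such a point returns to itself after fewer than $2\gamma/\rho$ iterations of $\bar P$; hence on a suitable neighborhood $\bar S'\ni\bar c$ in $\bar S$ the map $\bar P$ generates a finite cyclic group of homeomorphisms. Two points of $\bar S'$ lie on a common $\bar\Phi$-orbit exactly when they differ by a power of $\bar P$, so $\bar z$ has an open neighborhood in $G\backslash Y_{\axes}$ homeomorphic to $\bar S'/\langle\bar P\rangle$, a finite group quotient of the subspace $\bar S'\subseteq Z$, which therefore has inductive dimension at most $\dim\bar S'\le\dim X$. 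As $\bar z$ was arbitrary, this proves $\dim(G\backslash Y_{\axes})\le\dim X$.

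The routine points to fill in are that $Z$ together with the trivial group satisfies the axioms of a flow space — the only thing to check being continuity of $\bar\Phi$, which holds since the flow on $X_{\axes}$ is continuous and $\bbR$ is locally compact — and the dimension-theoretic fact, used twice, that the orbit space of a finite group acting on a metrizable space has covering dimension at most that of the space. The genuinely load-bearing input is the uniform bound $\gamma$ on the periods: this is exactly what forces the first-return map $\bar P$ to have finite order, and hence keeps the local orbit space low-dimensional; a periodic flow with unbounded periods can have a badly behaved orbit space, so I expect the careful handling of the return map (and the verification that the orbit-equivalence on a small transversal is precisely the $\langle\bar P\rangle$-action) to be the technical crux.
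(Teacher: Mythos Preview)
Your route is genuinely different from the paper's. The paper first quotients by the flow: it uses \cref{lem:proper} to see that the $\bbR$-action on $X_{\axes}$ is proper, invokes Palais' slice theorem to produce local \emph{sections} $K_y\hookrightarrow X_{\axes}$ of $p$, and reads off $\dim Y_{\axes}\le\dim X$; then it quotients by $G$, using the discreteness of orbits from \cref{lem:discrete} together with \cite[Proposition~9.2.16]{pears1975dimension} to pass to $G\backslash Y_{\axes}$. You reverse the order: first form $Z=G\backslash X_{\axes}$ (a finite-group quotient locally, so $\dim Z\le\dim X$), observe that the induced flow on $Z$ is periodic with periods in $(0,\gamma]$, and then analyse $\bbR\backslash Z$ via a flow box. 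In effect you are reducing the axes case to the periodic case treated in \cref{lem:perdimBound}; this avoids Palais' theorem and is conceptually pleasant.

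The gap is in the first-return-map step. Your bound ``fewer than $2\gamma/\rho$ iterations'' rests on the claim that \emph{each} return step has length $>\rho/2$, but you only argue this for the first step out of a point near $\bar c$; the next step starts at $\bar P(x)$, which need not lie near $\bar c$. (What is true without further work is that consecutive hits of $\bar S$ are $>l_B$ apart, by injectivity of $\mu_{\bar B}$, giving the weaker but still finite bound $\gamma/l_B$.) More seriously, for $\bar P$ to ``generate a finite cyclic group of homeomorphisms'' of some $\bar S'$ you need both $\bar P(\bar S')=\bar S'$ and continuity of $\bar P$ on $\bar S'$. Neither is automatic: the return time can jump when nearby orbits graze $\partial\bar S$, and even granting continuity there is no obvious $\bar P$-invariant neighbourhood of $\bar c$. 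These issues can be massaged (shrink $\bar S$ so the orbit of $\bar c$ meets it only at $\bar c$, then iterate a continuity/neighbourhood argument), but the write-up as it stands does not do so.

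There is a cleaner fix that stays within your framework and matches what the paper does for $X_{cyc}$ in \cref{lem:perdimBound}: forget $\bar P$ entirely. The slice $\bar S$ is compact, the target $G\backslash Y_{\axes}$ is Hausdorff by \cref{lem:Gmetrizable}, so the restriction of the quotient map $\bar S\to q(\bar S)$ is a \emph{closed} continuous surjection whose fibres have at most $\gamma/l_B$ points. Now \cite[Proposition~9.2.16]{pears1975dimension} gives $\dim q(\bar S)\le\dim\bar S\le\dim Z\le\dim X$ directly, and you finish with the local-to-global step exactly as you outlined.
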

\begin{proof}
$X_{\axes}$ is a metric space and hence completely regular. For every $x\in X_{\axes}$ there is a compact neighborhood $L$ of $x$. The space $H_x\coloneqq \{t\in\bbR\mid \Phi_t L\cap L\neq \emptyset\}$ is compact by \cref{lem:proper}. This implies by \cite[Theorem 2.3.2]{palaisslices} that there is a slice at $x$, i.e. there exists $S_x\subseteq X_{\axes}$ containing $x$ such that $\Phi_\bbR S_x\subseteq X_{\axes}$ is open and an $\bbR$-equivariant map $f:\Phi_\bbR S_x\to \bbR$ such that $f^{-1}(0)=S_x$. For $y\in Y_{\axes}$ let $U\coloneqq p(\Phi_\bbR S_{x})$ for some $x\in X_{\axes}$ with $p(x)=y$. This is an open neighborhood of $y$. We can define a section $s:U\to X_{\axes}$ by $s(y')=S_x\cap p^{-1}(y')$ for all $y'\in U$.
The continuity of the section follows from the alternative definition
$s(y')=\Phi_{-f(x')}x'$ for some $x'$ with $p(x')=y'$.
Since $Y_{\axes}$ is locally compact by \cref{lem:metrizable} there is for each $y$ a compact neighborhood $K_y$ of $y$ and a section $s:K_y\to X_{\axes}$.

By \cref{thm:subspace} and \cref{thm:compare} this implies that $\dim(K)\leq \dim(X_{\axes})\le \dim (X)$ and therefore $\text{locdim}(Y_{\axes})\leq \dim (X)$. Since $Y_{\axes}$ is metrizable it is paracompact and normal. This implies $\dim (Y_{\axes})= \text{locdim}(Y_{\axes})\leq \dim(X)$ by \cite[Proposition 3.4]{pears1975dimension}.

For every $y\in Y_{\axes}$ there is a compact neighborhood $L$ of $y$. Since the $G$-action on $Y_{\axes}$ has closed, discrete orbits by \cref{lem:discrete} the map $p\colon L\to G\backslash GL$ is finite-to-one. Both $L$ and $G\backslash GL$ are metrizable by \cref{lem:metrizable} and \cref{lem:Gmetrizable} and thus paracompact and normal. This implies
\[\dim(L)=\dim(G\backslash GL)\]
by \cite[Proposition~9.2.16]{pears1975dimension}.
As above we get
\[\dim(G\backslash Y_{\axes})= \text{locdim}(G\backslash Y_{\axes})=\dim(Y_{\axes})\leq \dim(X).\qedhere\]
\end{proof}

\begin{lemma}\label{lem:42}
Let $c\in X_{\leq\gamma}$ and let $\mcF$ be a family of subgroups. For any open $\mcF$-neighborhood $U\subseteq X_{\leq \gamma}$ of $\Phi_\bbR c$ there exists an open $\mcF$-neighborhood $V\subseteq U$ of $\Phi_\bbR c$ which is invariant under the flow.
\end{lemma}
\begin{proof}
Let $C$ be the complement of $GU$ and let $V$ be the intersection of $U$ with the complement of $\Phi_\bbR C$. Then $V$ contains $\Phi_\bbR c$ and is an $\mcF$-subset. It remains to show that $V$ is open or equivalently that $\Phi_\bbR C$ is closed. Since every element has $G$-period at most $\gamma$ and $C$ is $G$-invariant we have that $\Phi_\bbR C=\Phi_{[0,\gamma]}C$. This is closed since $[0,\gamma]$ is compact and $C$ is closed.
\end{proof}

\begin{proof}[Proof of \cref{prop:4.13}]
Let $y\in Y_{\axes}$ be given. By \cref{lem:discrete} the set $Gy\setminus\{y\}$ is closed and therefore $p^{-1}(y)$ and $p^{-1}(Gy\backslash y)$ are closed. Let $c\in p^{-1}(y)$ and $g\in G$ be such that $c$ is an axis for $g$. There is $\delta>\epsilon>0$ such that we have $B_\epsilon(\Phi_{[0,l(g,c)]}c)\cap p^{-1}(Gy\backslash y)=\emptyset$. Since $Gy\backslash y$ is invariant under $g$ and $g^n(B_\epsilon(\Phi_{[0,l(g,c)]}c))=B_\epsilon(\Phi_{[nl(g,c),(n+1)l(g,c)]}c)$ we conclude that also $B_\epsilon(p^{-1}(y))\cap p^{-1}(Gy\backslash y)=\emptyset$. It follows that $B_{\epsilon/2}(p^{-1}(y))$ is an open $\VCyc$-neighborhood of $p^{-1}(y)$ and thus by \cref{lem:42} contains an open $\VCyc$-neighborhood $V'_y$ which is invariant under the flow.

Then $V_y\coloneqq p(V'_y)$ is a $\VCyc$-neighborhood of $y$. Because $\pi\colon Y_{\axes}\to G\backslash Y_{\axes}$ is open, $\{\pi(V_y)\mid y\in Y_{\axes}\}$ is an open cover of $G\backslash Y_{\axes}$. By \cref{lem:4.12} there is a refinement $\mcW$ of dimension less or equal to $\dim(X)$. For any $W\in\mcW$ pick $y_W\in Y_{\axes}$ such that $W\subseteq \pi(V_{y_W})$. Now define
\[\mcV\coloneqq \{\pi^{-1}(W)\cap gV_{y_W}\mid W\in\mcW,g\in G\}.\]
This is an open $\VCyc$-cover because each $V_y$ is an open $\VCyc$-set. Its dimension is bounded by $\dim(X)$ because the dimension of $\mcW$ is bounded by $\dim(X)$ and for all $g\in G,y\in Y_{\axes}$ we have either $V_y=gV_y$ or $V_y\cap gV_y=\emptyset$. It is $G$-invariant because each $\pi^{-1}(W)$ is $G$-invariant. By construction $p^{-1}(\pi^{-1}(W)\cap gV_{y_W})\subseteq V'_{y_W}\subseteq B_\delta(p^{-1}(y_W))$. Pulling the cover $\mcV$ back along the quotient by the flow $X_{\axes}\rightarrow Y_{\axes}$ yields the desired $\VCyc$-cover.
\end{proof}

\section{Compact flow lines with bounded G-period}\label{sec:periodic}

In this section we want to cover the periodic part $X_{\cyc}$ with small $G$-period. In \cite{bartels2008equivariant} and \cite{flow} this subspace was just the fixed points of the flow, but here we do not want to assume this. In this section $p\colon X_{\cyc}\to Y_{\cyc}$ denotes the quotient map.

\begin{lemma}\label{lem:xcycclosed}
The subspace $X_{\cyc}\subseteq X$ is closed.
\end{lemma}
\begin{proof}
Let $x_n\in X_{\cyc}$ be a sequence converging to $x\in X$. We want to show that $x\in X_{\cyc}=X_{\cyc}'\cap X_{\le \gamma}$.
There are $g_n\in G,s_n\in[\gamma/2,\gamma]$ with $g_nx_n=\Phi_{s_n}x_n$. If we have $g_n,s_n$ with $s_n\in[\gamma/4,\gamma/2]$ we take $g_n^2,2s_n$ and so on. After passing to a subsequence we can assume $s_n$ converges to $s\in[\gamma/2,\gamma]$. Then the sequence $g_n\Phi_{-s_n}x_n=x_n$ converges to $x$ and thus also $g_n^{-1}\Phi_{s}x$ converges to $x$. Since the $G$ action is proper, we can find a subsequence with $g_n\equiv g$. Therefore, $\Phi_{s}x=gx$ and $x\in X_\le \gamma$. The subgroup generated by $g$ acts properly on the compact space $\Phi_\bbR(x_n)$ and thus $g$ has finite order. This implies $\Phi_{ms}x=x$, where $m$ is the order of $g$. Hence $x\in X'_{\cyc}$.
\end{proof}

In general $X_{\cyc}'\subseteq X$ need not be closed, as the example of the geodesic flow on the unit tangent bundle of the two dimensional flat torus shows. Thus the bound on the $G$-period is really crucial.

\begin{lemma} The spaces $Y_{\cyc}$ and $G\backslash Y_{\cyc}$ are locally compact and the induced $G$-action on $Y_{\cyc}$ is proper.
\end{lemma}
\begin{proof}
Since $X_{\cyc}$ is closed in $X$ and $X$ is locally compact, also $X_{\cyc}$ is locally compact. Let $y\in Y_{\cyc}$ respectively $y\in G\backslash Y_{\cyc}$ and let $x\in X_{\cyc}$ be a preimage of $y$. Let $U$ be an open neighborhood of $y$. Then $x$ has a compact neighborhood $L$ contained in the preimage of $U$. The image of $L$ in $Y_{\cyc}$ respectively $G\backslash Y_{\cyc}$ is a compact neighborhood of $y$, since quotient maps by group actions are open.

Now let $y\in Y_{\cyc}$ be given. Since $X_{\cyc}$ is locally compact this implies that $p^{-1}(y)$ has a compact neighborhood $L_y$.
To show that $G$ acts properly on $Y_{\cyc}$, it suffices to show that  the set
\[T\coloneqq \{g\in G\mid gp(L_y)\cap p(L_y)\neq \emptyset\}=\{g\in G \mid \exists x\in L_y,t\in \bbR :gx\in \Phi_tL_y\}\]
is finite for any point $y\in Y_{\cyc}$. Let $S\coloneqq \{g\mid \exists c\in L_y,t\in[0,\gamma]:\Phi_tc=gc\}$. This set is a subset of $S':=\{g\in G\mid g\Phi_{[0,\gamma]}L_y\cap \Phi_{[0,\gamma]}L_y \neq \emptyset\}$ and the latter is finite, since $G$ acts properly on $X$ and $\Phi_{[0,\gamma]}L_y$ is compact. So $S$ is finite. Any $g\in S$ has finite order since $\langle g\rangle$ acts properly on the compact space $\Phi_\bbR c$ for $c\in L_y\subset X_{\cyc}$ with $\Phi_tc=gc$. Thus the set $S^*=\{g^m\mid g\in S, m\in \bbZ\}$
is also finite. Now let us show that $T\subseteq S'\cdot S^*$. For $g\in T$ choose $x\in L_y$, $t\in \bbR$ such that $gx\in \Phi_tL_y$. Since $x$ has $G$-period $\le \gamma$, we can find an $h\in G$, such that $hx = \Phi_{t'}x$ for some $t'\in (0,\gamma]$. Usually $t'$ is the $G$-period of $x$; if that period is $0$, we can choose $t'$ as any positive number. By definition of $S$ we have $h\in S$. We get 
\[gh^mx = g\Phi_{mt'}x=\Phi_{mt'}gx\in \Phi_{mt'+t}L_y\]
and if we choose $m\in \bbZ$ suitably, we have $mt'+t\in [0,\gamma]$ and thus $gh^mx\in \Phi_{[0,\gamma]}L_y$ and thus $gh^m\in S'$. Hence
\[g=(gh^m)\cdot h^{-m}\in S'\cdot S^*\qedhere\]

\end{proof}

\begin{lemma} The space $Y_{\cyc,>0}$ and its quotient by the $G$-action are Hausdorff.
\end{lemma}
\begin{proof}
Pick two different points $y,y'\in Y_{\cyc,>0}$ and let $x,x'\in X_{\cyc,>0}$ be two preimages. Thus $\Phi_\bbR(x)$ and $\Phi_\bbR(x')$ are compact. There exist open disjoint $\Fin$-neighborhoods $U,U'$ of $\Phi_\bbR(x)$ and $\Phi_\bbR(x')$. By \cref{lem:42} they contain open $\Fin$-neighborhoods $V,V'$ which are invariant under the flow. Their images under $p$ then are the desired disjoint open sets.

We still have to show that the quotient by the $G$-action is Hausdorff. Let points $Gy\neq Gy'\in G\backslash Y_{\cyc,>0}$ be given. Let $x,x',U,U'$ be as above. Since the $G$-action is proper, we can assume $U\cap gU'=\emptyset$ for all $g\in G$. Doing the same construction as above and pushing it to the quotient, we end up with separating neighborhoods for $Gy$ and $Gy'$. Thus $G\backslash Y_{\cyc,>0}$ is Hausdorff.
\end{proof}

\begin{lemma} The spaces $Y_{\cyc,>0}$  and its quotient by the $G$-action are paracompact and normal.
\end{lemma}
\begin{proof}
Since $Y_{\cyc,>0}$ and $G\backslash Y_{\cyc,>0}$ are open subsets of $Y_{\cyc}$ and $G\backslash Y_{\cyc}$ respectively, both are again locally compact.
By \cite[Exercise~3~on~p.205]{munkres} every locally compact, Hausdorff space is regular. Second-countable spaces are Lindel\"of spaces. A regular Lindel\"of space is paracompact by \cite[Theorem~41.5]{munkres}. All paracompact Hausdorff spaces are normal by \cite[Theorem~41.1]{munkres}.
\end{proof}

\begin{lemma}\label{lem:perdimBound} We have that
$\dim(G\backslash Y_{\cyc,>0})\le \dim(Y_{\cyc,>0})\le \dim(X)$.
\end{lemma}
\begin{proof}
We start with the second inequality. Let $y\in Y_{\cyc,>0}$ be any point and let $x$ be a preimage. Pick a box $B$ around $x$ and consider the map induced by $p$
\[p':S_B\rightarrow p(S_B).\]
Note that $p(U) = p(\Phi_{(-\varepsilon,\varepsilon)}U)$ and $p$ is open since it is the quotient by a group action. Thus the map $p'$  is a continuous open surjection.

Pick some $y'\in p'(S_B)$ and some preimage $x'$. Since $B$ is a box of length $l_B$, any two points of the set
\[S=\{t\in \bbR\mid \Phi_t(x')\in S_B\}\]
have distance at least $l_B/2$ and thus it is discrete. Let $M$ be the period of $x'$.
Thus,
\[\{x''\in S_B\mid p(x'')=y'\}=\{\Phi_s(x')\mid s\in S\}=\{\Phi_s(x')\mid s\in S\cap [0,M]\}\]
is finite. Thus we have a continuous surjection between paracompact, normal spaces where every point has finitely many preimages.
Thus by \cite[Proposition~9.2.16]{pears1975dimension} we get that $\dim(p(S_B))\le \dim(S_B)\le  \dim(X)$.
Since $p'(\mathring{S}_B)=p'(\mathring{B})$ is open and $x$ was arbitrary, we get that $\text{locdim}(Y_{\cyc,>0})\leq \dim(X)$. 

The space $Y_{\cyc,>0}$ is paracompact and normal and by \cite[Proposition~3.4]{pears1975dimension} this implies $\dim (Y_{\cyc,>0})= \text{locdim} (Y_{\cyc,>0})\leq \dim (X)$.

The first inequality follows the same way. Note that every point in $Y_{\cyc,>0}$ has a compact, $\Fin$-neighborhood $K$. To understand the local dimension, we can consider the map 
$K\rightarrow G_K\backslash K\subseteq G\backslash Y_{\cyc,>0}$.
\end{proof}

\begin{lemma}
\label{lem:4.15renew} Let $\delta>0$ be given. There is a $G$-invariant $\Fin$-cover $\mcV_{\cyc}$ of $X_{\cyc}$ of dimension at most $2\dim(X)+1$ such that we can find for every $x\in X_{\cyc}$ a $V\in \mcV_{\cyc}$ containing $\Phi_\bbR(x)$ and for every $V\in\mcV_{\cyc}$ a point $x\in X$ with $V\subseteq B_\delta(\Phi_\bbR(x))$.
\end{lemma}
\begin{proof}
We will first deal with the subspace $X^\bbR$ which is independent of $\gamma$.  Let $\pi:X^\bbR\rightarrow G\backslash X^\bbR$ denote the quotient map. Since the group action is proper, we get that the quotient is metrizable. 
For any point $Gx\in G\backslash X$ pick a compact neighborhood $K$ of a preimage. The quotient map
$K\rightarrow G\backslash GK$ is a continuous, finite-to-one, open surjection. 
Thus by \cite[Proposition~9.2.16]{pears1975dimension} we get that $\dim(G\backslash GK)\le \dim(K)\le  \dim(X)$.
Since $x$ was arbitrary, we get that $\text{locdim}(G\backslash X^\bbR)\leq \dim(X)$. 

The space $G\backslash X^\bbR$ is paracompact and normal and by \cite[Proposition~3.4]{pears1975dimension} this implies $\dim (G\backslash X^\bbR)= \text{locdim} (G\backslash X^\bbR)\leq \dim (X)$.
Pick a $\Fin$-cover $\calv$ of $X^\bbR$ and refine $\{\pi(V\cap B_{\delta/2}(x))\mid V \in \calv, x\in X^{\bbR}\}$ to a cover $\calu$ of dimension at most $\dim(X)$.
Since $\calu$ is a refinement of $\{\pi(V)\mid V\in \calv \}$, we can find for every $U\in \calu$ a $V_U\in \calv$ with $U\subseteq \pi(V_U)$. 
Then the open $\Fin$-cover of $X^\bbR$ given by 
\[\calu_{X^\bbR}=\{\pi^{-1}(U)\cap  gV_{U} \mid U\in \calu,g\in G\}\]
is at most $\dim(X)$ dimensional.

Now let us look at $X_{\cyc,>0}$. Let $p\colon X_{\cyc,>0}\to Y_{\cyc,>0}$ be the quotient map.
Since the $G$-action on $Y_{\cyc,>0}$ is proper, we can find a refinement $\calv$ of $\{p(B_{\delta/2}(\Phi_\bbR(x)))\mid x\in X_{\cyc,>0}\}$ that is a $\Fin$-cover of $Y_{\cyc,>0}$.

Let us look at the quotient by the group action
\[\pi:Y_{\cyc,>0}\rightarrow G\backslash Y_{\cyc,>0}.\]
Push $\calv$ down to the quotient by the $G$-action and refine it to a cover $\calu$ of dimension at most $\dim(X)$ using \cref{lem:perdimBound}. Being a refinement means that we can find for every $U\in \calu$ a $V_U\in \calv$ with $\pi^{-1}(U)\subseteq G(V_U)$. Then the open $\Fin$-cover
\[\calu_{Y_{\cyc,>0}}=\{\pi^{-1}(V)\cap  gU_V \mid V\in \calv,g\in G\}\]
is a cover of dimension at most $\dim(X)$ of $Y_{\cyc,>0}$. Pulling it back to $X_{\cyc,>0}$ gives an open $\Fin$-cover $\calu_{X_{\cyc,>0}}$ of $X_{\cyc,>0}$. 
Now use \cref{lem:openextension} to extend the collections $\calu_{X^\bbR},\calu_{X_{\cyc,>0}}$ to open subsets of $B_{\delta/2}(X^\bbR)\cap X_{\cyc}$ and $B_{\delta/2}(X_{\cyc,>0})\cap X_{\cyc}$ respectively and take $\mcV_{\cyc}$ as their union. The construction in \cref{lem:openextension} is made in such a way that set $V\in\mcV_{\cyc}$ is still contained in $B_\delta(\Phi_\bbR(x))$ for some $x\in X_{\cyc}$.
\end{proof}
\section{Proof of the Main Theorem}\label{sec:proofmain}
We can now use the previous sections to prove the main theorem.

\begin{proof}[Proof of the Main \cref{thm:main}]
Let $\gamma\coloneqq 20\alpha$. Let $\calu_{>\gamma}$ be the open cover of $X_{>\gamma}$ from \cref{thm:long}. Its dimension is at most $5(\ind(X)+1)=5\dim(X)+5$. Recall that by \cref{lem:4.10} and \cref{lem:xcycclosed} we have $X_{\le \gamma}=X_{\axes}\amalg X_{\cyc}$ and $X_{\le \gamma}$ is closed in $X$. Thus the elements of $\calu_{>\gamma}$ are open in $X$.

To cover $X_{\le\gamma}$ we can use the covers from \cref{lem:4.15renew} and \cref{prop:4.13} with $\delta/2$ instead of $\delta$ and take their union. The union has dimension at most $2\dim(X)+1$. Extend it to a $\VCyc$-collection $\calu_{\le\gamma}$ of open subsets of $X$ using \cref{lem:openextension} for $X_{\le \gamma}\subseteq B_{\delta/2}(X_{\le\gamma})$ and define $\calu\coloneqq \calu_{\le \gamma}\cup \calu_{>\gamma}$. It has dimension at most $7\dim(X)+7$. Since we are only extending to a $\delta/2$-neighborhood the construction in \cref{lem:openextension} will enlarge sets by no more than $\delta/2$ and each $U\in\calu$ is still contained in $B_\delta(\Phi_\bbR(x))$ for some $x\in X$.
\end{proof}

\begin{rem}
Only the construction for the nonperiodic part with short $G$-period produces a $\VCyc$-cover. For the other parts the construction produces a $\mcF in$-cover instead. Therefore, only those virtually cyclic subgroups of $G$ appear as stabilizers of the cover for which there exists an axis. 
\end{rem}

If the flow space is cocompact, then it also is locally compact and second-countable by \cref{prop:sep} and as in \cite[Lemma~5.8]{flow} we obtain the following corollary.

\begin{cor}\label{cor:main}
Let $X$  be a finite-dimensional and cocompact flow space $X$ and $\alpha$ be a number greater than $0$. Then there is an $\varepsilon>0$ and a $\VCyc$-cover $\calu$ of $X$ such that for every point $x\in X$ there is an open set $U\in \calu$ with $B_{\epsilon}(\Phi_{[-\alpha,\alpha]}(x))\subseteq U$ of dimension at most $7\dim(X)+7$.
\end{cor}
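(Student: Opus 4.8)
The plan is to deduce \cref{cor:main} from \cref{thm:main} in the same spirit as \cite[Lemma~5.8]{flow}. First I would invoke \cref{prop:sep}: a cocompact flow space is automatically locally compact and second-countable, and it is finite-dimensional by hypothesis, so \cref{thm:main} applies and produces a $\VCyc$-cover $\calu$ of dimension at most $7\dim(X)+7$ such that every $\Phi_{[-\alpha,\alpha]}(x)$ lies in some $U\in\calu$. This same $\calu$ will serve for the corollary, so the only thing left to construct is one number $\varepsilon>0$, uniform in $x$, with $B_\varepsilon(\Phi_{[-\alpha,\alpha]}(x))\subseteq U$ for a suitable $U\in\calu$; the dimension bound is then inherited automatically.

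To produce $\varepsilon$ I would consider the function $g\colon X\to(0,\infty]$ defined by
\[
  g(x)=\sup\bigl\{\,r>0 : B_r(\Phi_{[-\alpha,\alpha]}(x))\subseteq U\ \text{for some}\ U\in\calu\,\bigr\}.
\]
Each $\Phi_{[-\alpha,\alpha]}(x)$ is compact, being the image of the compact interval $[-\alpha,\alpha]$ under the continuous map $t\mapsto\Phi_t(x)$, and it is contained in the open set $U$; hence it has positive distance from $X\setminus U$ and $g(x)>0$ for every $x$. Since $\calu$ is $G$-invariant, the $G$- and $\bbR$-actions on $X$ commute, and $G$ acts isometrically, one has $\Phi_{[-\alpha,\alpha]}(\gamma x)=\gamma\,\Phi_{[-\alpha,\alpha]}(x)$ and $B_r(\gamma A)=\gamma B_r(A)$ for $\gamma\in G$, so $g$ is $G$-invariant. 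If in addition $g$ is lower semicontinuous, then it descends to a lower semicontinuous function on the compact space $X/G$, which therefore attains a minimum; this minimum is positive because $g$ is everywhere positive, and any $\varepsilon$ strictly below it then satisfies the required property for all $x$.

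The main obstacle is thus the lower semicontinuity of $g$, for which the key point is that $x\mapsto\Phi_{[-\alpha,\alpha]}(x)$ is continuous from $X$ into the space of nonempty compact subsets of $X$ with the Hausdorff metric. This follows from joint continuity of the flow by a standard tube-lemma argument: given $x$ and $\delta>0$, for every $t\in[-\alpha,\alpha]$ continuity of the flow at $(t,x)$ furnishes a product neighbourhood of $(t,x)$ on which $\Phi$ stays within $\delta$ of $\Phi_t(x)$, finitely many such neighbourhoods cover $[-\alpha,\alpha]\times\{x\}$, and intersecting the corresponding neighbourhoods of $x$ yields a neighbourhood $V$ of $x$ with $d_H(\Phi_{[-\alpha,\alpha]}(y),\Phi_{[-\alpha,\alpha]}(x))$ small for all $y\in V$. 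Granting this, let $r<g(x)$, pick $r'$ with $r<r'<g(x)$ and $U\in\calu$ with $B_{r'}(\Phi_{[-\alpha,\alpha]}(x))\subseteq U$; then for $y$ close enough to $x$ we have $B_r(\Phi_{[-\alpha,\alpha]}(y))\subseteq B_{r'}(\Phi_{[-\alpha,\alpha]}(x))\subseteq U$, so $g(y)\ge r$, giving $\liminf_{y\to x}g(y)\ge g(x)$. The remaining steps are a routine combination of this with compactness of $X/G$, so I expect the Hausdorff-continuity statement to be the only real work.
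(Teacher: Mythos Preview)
Your proposal is correct and follows exactly the approach indicated in the paper: invoke \cref{prop:sep} to verify the hypotheses of \cref{thm:main}, apply \cref{thm:main} to obtain $\calu$, and then extract a uniform $\varepsilon$ by a compactness argument on $G\backslash X$, which is precisely the content of \cite[Lemma~5.8]{flow} that the paper cites. You have simply unpacked that cited lemma in detail via the lower semicontinuous function $g$ and the Hausdorff-continuity of $x\mapsto\Phi_{[-\alpha,\alpha]}(x)$.
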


\section{Applications}
\label{sec:applications}
\subsection*{Approximating the orbit space}

Let $U$ be a simplicial complex and $v\in U$ a vertex. The open star $\text{St}(v)$ of $v$ consists of all simplices of $U$ containing $v$. Note that this is in general not a subcomplex. From \cref{thm:main} we obtain the following approximation of the orbit space $\bbR\backslash X$ already eluded to in the introduction.

\begin{theorem}
Let $X$ be as in \cref{thm:main}. Then there exists a sequence of $G$-equivariant maps $f_n\colon X\to V_n$ into simplicial complexes $V_n$ with dimension at most $7\dim X+7$ and $\VCyc$-stabilizers such that for every vertex $v\in V$ there is $x\in X$ with $f_n^{-1}(\text{St}(v))\subseteq B_\delta(\Phi_\bbR(x))$ and for every $x\in X$ there exists a vertex $v\in V_n$ with $\Phi_{[-n,n]}(x)\subseteq f_n^{-1}(\text{St}(v))$.
\end{theorem}
\begin{proof}
Let $\mcU_n$ be a cover as in \cref{thm:main} such that for each $x\in X$ there exists $U\in\mcU_n$ with $\Phi_{[-n,n]}(x)\subseteq U$. Let $V_n$ be the nerve of $\mcU_n$, i.e.\ the simplicial complex with vertex set $\mcU_n$ and  the elements $U_1,\ldots, U_n$ span a simplex if and only if $\bigcap_{i=1}^nU_i\neq \emptyset$. For $x\in X$ let $d(x):=\sum_{U\in \mcU_n}d(x,X\setminus U)$. We have $d(x)\in\bbR$, since the cover is locally finite. Define $f_n\colon X\to V_n$ by $x\mapsto \sum_{U\in\mcU_n}\frac{d(x,X\setminus U)}{d(x)}U$. Then $f_n^{-1}(\text{St(U)})=U$ and hence $V_n$ satisfies the properties in the theorem.
\end{proof}

\subsection*{The Farrell--Jones conjecture}
The Farrell--Jones conjecture for a group $G$  says that the $K$-theoretic assembly map
\[\mathcal{H}^G_*(E_\VCyc G;\mathbf{K}_\cala)\rightarrow \mathcal{H}^G_*(pt;\mathbf{K}_\cala)=K^{alg}_*(\cala[G])\]
and the $L$-theoretic assembly map
\[\mathcal{H}^G_*(E_\VCyc G;\mathbf{L}_\cala)\rightarrow \mathcal{H}^G_*(pt ;\mathbf{L}_\cala)=L^{\langle-\infty\rangle}_*(\cala[G])\]
are isomorphisms for any additive $G$-category $\cala$ (with involution), see \cite[Conjectures~3.2~and~5.1]{coefficients}. The Farrell--Jones conjecture implies several other conjectures. See \cite{baum} for details.

As in \cite[Definition~2.15]{wegner2013farrell} we say that a group $G$ satisfies the Farrell--Jones conjecture with finite wreath products if for any finite group $F$ the wreath product $G\wr F$ satisfies the $K$- and $L$-theoretic Farrell--Jones conjecture. We will use the abbreviation \FJCw{} for ”Farrell--Jones conjecture with finite wreath products”.

\begin{definition}[{\cite[Definition~2.1,Definition~2.3]{wegnercat}}] A \emph{strong homotopy action} of a group $G$ on a topological space $X$
is a continuous map
\[\Psi: \coprod_{j=0}^\infty (G\times [0,1])^j\times G\times X \rightarrow X\]
with the following properties:
\begin{enumerate}
\item $\Psi(\ldots,g_l,0,g_{l-1},\ldots) = \Psi(\ldots,g_l,\Psi(g_{l-1},\ldots))$,
\item $\Psi(\ldots,g_l,1,g_{l-1},\ldots) = \Psi(\ldots,g_l \cdot g_{l-1},\ldots)$,
\item $\Psi(e,t_j,g_{j-1},\ldots)=\Psi(g_{j-1},\ldots)$,
\item $\Psi(\ldots,t_l,e,t_{l-1},\ldots) = \Psi(\ldots,t_l \cdot t_{l-1},\ldots)$,
\item $\Psi(\ldots,t_1,e,x) = \Psi(\ldots,x)$,
\item $\Psi(e,x) = x$.
\end{enumerate}
For a subset $S\subseteq G$ containing $e$,$g\in G$ and a $k\in \bbN$ define
\[F_g(\Psi,S,k)\coloneqq  \{\Psi(g_k,t_k,\ldots,g_0,?):X\rightarrow X\mid g_i\in S,t_i\in [0,1],g_k \ldots g_0=g\}.\]
For $(g,x)\in G\times X$ we define $S^0_{\Psi,S,k}(g,x)$ as $\{(g,x)\}$, $S^1_{\Psi,S,k}(g,x)\subseteq G\times X$ as the subset consisting of all $(h,y)\in G\times X$ with the following property: There are $a,b\in S$, $f\in F_a(\Psi,S,k),f'\in F_b(\Psi,S,k)$ such that $f(x)=f'(y)$ and $h=ga^{-1}b$.

For $n\ge 2$ define inductively $S^n_{\Psi,S,k}(g,x)=\bigcup_{(h,y)\in S^{n-1}_{\Psi,S,k}(g,x)} S^1_{\Psi,S,k}(h,y)$.
\end{definition}
The definition of a controlled $N$-dominated metric space can be found in \cite[Definition~1.5]{bartels2012borel}.

\begin{definition}[{\cite[Definition~3.1]{wegnercat}}] A group $G$ is \emph{strongly transfer reducible} over a family $\calf$ of subgroups if there exists a natural number $N\in \bbN$ with the following property: For every finite symmetric subset $S\subseteq G$ containing the trivial element $e$ and all $n,k\in \bbN$ there are
\begin{itemize}
\item a compact, contractible, controlled $N$-dominated metric space $X$,
\item a strong homotopy $G$-action $\Psi$ on $X$ and
\item an open $\calf$-cover $\calu$ of $G\times X$ of dimension at most $N$ such that for every $(g,x)\in G\times X$ there exists $U\in \calu$ with $S^n_{\Psi,S,k}(g,x)\subseteq U$.
\end{itemize}
\end{definition}
Every virtually cyclic group is a CAT(0)-group and therefore satisfies \FJCw{} by \cite[Example~2.16(i)]{wegner2013farrell}. Thus, by \cite[Proposition~2.20]{wegner2013farrell} a group $G$ satisfies \FJCw{} if it is strongly transfer reducible over the family $\VCyc$ of virtually cyclic subgroups. We  will now define certain properties of flow spaces under which \cref{thm:main} allows us to show strong transfer reducibility.
\begin{defi}
\label{def:unicont}
Let $X$ be a flow space. The flow is  \emph{uniformly continuous} if for every $\alpha,\varepsilon > 0$ there is a $\delta>0$ such that for all $z,z'\in X$ with $d_X(z,z')\leq \delta$ and for any $t\in[-\alpha,\alpha]$ we also get $d_X(\Phi_tz,\Phi_tz')\leq \varepsilon$.
\end{defi}

The following definition is a weakening of \cite[Definition~5.5]{flow}. We do not have to assume the existence of covers of the periodic part anymore. 

\begin{definition}\label{def:longinf} A flow space $X$ for a group $G$ \emph{admits long $\calf$-covers at infinity} if the following holds:
There is $M>0$ such that for every $\alpha>0$ there is an $\calf$-collection $\calv$ of dimension at most $M$, a compact subset $K\subseteq X$ and an $\varepsilon>0$ such that for every $z\in X\setminus GK$ there is a $V\in \calv$ with
\[B_\varepsilon(\Phi_{[-\alpha,\alpha]}x)\subseteq V.\]
\end{definition}
Note that in the definition it makes no difference if we assume that collection $\calv$ consists of open $\calf$-sets.
\begin{lemma}\label{lem:5.7} Let $X$ be a finite-dimensional, second-countable and locally compact flow space for the group $G$ such that
 there are long $\calf$-covers at infinity. Then there is an $N>0$ such that for every $\alpha>0$ there is an $\varepsilon>0$ and an $\calf\cup \VCyc$-cover $\calu$ of $X$ of dimension at most $N$ such that for every $x\in X$ there is a $U\in \calu$ with
 \[B_\varepsilon(\Phi_{[-\alpha,\alpha]}x)\subseteq U.\]
\end{lemma}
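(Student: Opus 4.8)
The plan is to split $X$ into a ``thick'' part $GK$, where $K$ is the compact set furnished by the hypothesis, and its complement, and to treat the two pieces by different means. On the complement we simply use the collection coming from the assumption that $X$ admits long $\calf$-covers at infinity, since there the required metric thickening is already built in. On $GK$ we invoke \cref{thm:main} and then upgrade the long cover it produces to a metrically thickened one by a Lebesgue-number argument that uses compactness of $K$ together with continuity of the flow. I would set $N\coloneqq 7\dim(X)+M+8$, where $M$ is the constant from \cref{def:longinf}; it depends only on $X$ and $\calf$ and not on $\alpha$, which is what the statement demands.

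Fixing $\alpha>0$, I would first extract from \cref{def:longinf} an $\calf$-collection $\calv_\infty$ of open sets of dimension at most $M$, a compact $K\subseteq X$ and an $\varepsilon_1>0$ such that for every $z\in X\setminus GK$ there is $V\in\calv_\infty$ with $B_{\varepsilon_1}(\Phi_{[-\alpha,\alpha]}z)\subseteq V$. Then I would apply \cref{thm:main} to obtain a $\VCyc$-cover $\mcW$ of $X$ of dimension at most $7\dim(X)+7$ such that for every $x\in X$ some open $W\in\mcW$ contains $\Phi_{[-\alpha,\alpha]}x$.

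The main step, and the only place where real work is needed, is to produce $\varepsilon_2>0$ such that for every $x\in GK$ some $W\in\mcW$ contains $B_{\varepsilon_2}(\Phi_{[-\alpha,\alpha]}x)$. Consider $\rho(x)\coloneqq\sup\{r>0\mid \exists\, W\in\mcW\colon B_r(\Phi_{[-\alpha,\alpha]}x)\subseteq W\}$. Since $G$ acts isometrically, commutes with the flow, and $\mcW$ is $G$-invariant, $\rho$ is $G$-invariant; moreover $\rho(x)>0$ for every $x$ because $\Phi_{[-\alpha,\alpha]}x$ is compact and contained in the open set $W$. Hence it suffices to see that $\inf_{x\in K}\rho(x)>0$, and for this I would argue by contradiction: given $k_n\in K$ with $\rho(k_n)\to 0$, pass to a subsequence $k_n\to k\in K$, choose an open $W\in\mcW$ and an $r>0$ with $B_{2r}(\Phi_{[-\alpha,\alpha]}k)\subseteq W$, and use uniform continuity of $(t,y)\mapsto\Phi_ty$ on $[-\alpha,\alpha]\times L$ for a compact neighborhood $L$ of $k$ (this is where local compactness enters) to conclude that $\sup_{t\in[-\alpha,\alpha]}d(\Phi_tk_n,\Phi_tk)<r$ for all large $n$; then $B_r(\Phi_{[-\alpha,\alpha]}k_n)\subseteq B_{2r}(\Phi_{[-\alpha,\alpha]}k)\subseteq W$ and $\rho(k_n)\ge r$, a contradiction. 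I would then set $\varepsilon_2\coloneqq\tfrac12\inf_{x\in K}\rho(x)$.

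Finally I would take $\varepsilon\coloneqq\min(\varepsilon_1,\varepsilon_2)$ and $\calu\coloneqq\calv_\infty\cup\mcW$. This is a $G$-invariant collection of open $\calf\cup\VCyc$-subsets covering $X$ (already $\mcW$ does), hence an $\calf\cup\VCyc$-cover; its dimension is at most $(7\dim(X)+7)+M+1=N$. For $x\in X\setminus GK$ the set $B_\varepsilon(\Phi_{[-\alpha,\alpha]}x)$ is contained in a member of $\calv_\infty$, and for $x\in GK$ it is contained in a member of $\mcW$, so the required property holds for every $x\in X$. The only genuine obstacle is the contradiction argument of the previous paragraph; the rest is checking that the two collections combine without spoiling equivariance, the $\calf\cup\VCyc$-condition, or the dimension bound.
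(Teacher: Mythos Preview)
Your proof is correct and follows essentially the same route as the paper: split into $GK$ and its complement, use the $\calf$-collection from \cref{def:longinf} on the complement, the $\VCyc$-cover from \cref{thm:main} on $GK$, and combine them with $\varepsilon=\min(\varepsilon_1,\varepsilon_2)$ and $N=7\dim(X)+M+8$. The only difference is cosmetic: where the paper invokes \cite[Lemma~5.8]{flow} to obtain $\varepsilon_2$, you spell out the Lebesgue-number/compactness argument directly, which is exactly what that lemma does.
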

\begin{proof} Let $\alpha>0$ be given. Since $X$ admits long covers at infinity, we obtain an $M>0$ (which is independent of $\alpha$), an $\calf$-collection $\calv$ of dimension at most $M$, an $\varepsilon_\infty>0$ and a compact subset $K$ as in \cref{def:longinf}.
We can find a $\VCyc$-cover $\calv'$ of $X$ as in \cref{thm:main}. Using \cite[Lemma~5.8]{flow} we can find an $\varepsilon_K>0$ such that we have for every $x\in GK$ an open set $U\in \calv'$ with $B_{\varepsilon_K}(\Phi_{[-\alpha,\alpha]}(x))\subseteq U$. Now the $\calf\cup \VCyc$-cover $\calu\coloneqq \calv'\cup \calv$ has dimension at most $N\coloneqq M+7\dim(X)+8$ and we can find for $x\in X$ an open set $U\in \calu$ with:
\[B_{\varepsilon}(\Phi_{[-\alpha,\alpha]}(x))\subseteq U\]
with $\varepsilon\coloneqq \min(\varepsilon_\infty,\varepsilon_K)$. 
\end{proof}

Morally, long covers at infinity allow us to find an $\varepsilon$ as in the last lemma even in the noncocompact setting.

\begin{definition} \label{def:sct}A flow space FS for a group $G$ admits \emph{strong contracting transfers} if there is an $N\in\bbN$ such that for every finite subset $S$ of $G$ and every $k\in \bbN$ there exists $\beta>0$  such that the following holds. For every $\delta>0$ there is 
\begin{enumerate}
\item $T>0$;
\item a contractible, compact, controlled $N$-dominated space $X$;
\item a strong homotopy action $\Psi$ on $X$;
\item a $G$-equivariant map $\iota: G\times X\rightarrow FS$ (where the $G$-action on $G\times X$ is given by $g\cdot (g',x)=(gg',x)$) such that the following holds:
\item \label{def:sct5} for every $(g,x)\in G\times X, s\in S,f\in F_g(\Psi,S,k)$ there is a $\tau \in [-\beta,\beta]$ such that
$d_{FS}(\Phi_T\iota(g,x),\Phi_{T+\tau}\iota(gs^{-1},f(x)))\le \delta$.
\end{enumerate}
\end{definition}

\begin{lemma}\label{lem:5.12} Let $FS$ be a flow space for a group $G$ with a uniformly continuous flow and assume that $FS$ admits strong contracting transfers. Let $\varepsilon >0, k,n\in \bbN$ and a finite subset $S\subseteq G$ containing $e$ be given. Let $\beta$ be as in \cref{def:sct} and define $\alpha\coloneqq 2n\beta$. Let $\delta$ be as in \cref{def:unicont}. Let $T>0, X, \Psi$ and $\iota$ be as in \cref{def:sct}. Then for every $(g,x)\in G\times X$ and $(h,y)\in S^n_{\Psi,S,k}(g,x)$ there is a $\tau\in [-\alpha,\alpha]$ such that 
\begin{equation*}d_{FS}(\Phi_T(\iota(g,x)),\Phi_{T+\tau}(\iota(h,y)))\le 2n\varepsilon.\end{equation*}
\end{lemma}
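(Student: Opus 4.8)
The plan is to induct on $n$, having first extracted from \cref{def:sct} what a single $S^1$-step yields. If $n=0$ the statement is trivial, since $S^0_{\Psi,S,k}(g,x)=\{(g,x)\}$ and one takes $\tau=0$; so assume $n\ge 1$, so that $\alpha=2n\beta>0$. Recall that the $\delta$ here is the one from \cref{def:unicont} for the parameters $\alpha,\varepsilon$, so that any two points of $FS$ at distance $\le\delta$ remain at distance $\le\varepsilon$ after flowing for any time in $[-\alpha,\alpha]$; this is the only place uniform continuity enters.

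\emph{One $S^1$-step.} First I would prove: for any $(g,x)\in G\times X$ and $(h,y)\in S^1_{\Psi,S,k}(g,x)$ there are a point $q\in FS$ and reals $\sigma,\rho\in[-\beta,\beta]$ with
\[d_{FS}(\Phi_T\iota(g,x),\Phi_{T+\sigma}(q))\le\delta\qquad\text{and}\qquad d_{FS}(\Phi_T\iota(h,y),\Phi_{T+\rho}(q))\le\delta.\]
By definition of $S^1_{\Psi,S,k}$ there are $a,b\in S$, $f\in F_a(\Psi,S,k)$, $f'\in F_b(\Psi,S,k)$ with $f(x)=f'(y)=:z$ and $h=ga^{-1}b$. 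Apply \cref{def:sct}\eqref{def:sct5} to the pair $(a,x)$ with group element $s=a\in S$ and map $f\in F_a(\Psi,S,k)$; since $aa^{-1}=e$ this gives $\sigma\in[-\beta,\beta]$ with $d_{FS}(\Phi_T\iota(a,x),\Phi_{T+\sigma}\iota(e,z))\le\delta$. Likewise, applied to $(b,y)$ with $s=b\in S$ and $f'\in F_b(\Psi,S,k)$, it gives $\rho\in[-\beta,\beta]$ with $d_{FS}(\Phi_T\iota(b,y),\Phi_{T+\rho}\iota(e,z))\le\delta$. Now set $q\coloneqq(ga^{-1})\iota(e,z)$. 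Since $\iota$ is $G$-equivariant (with $G$ acting on $G\times X$ by left multiplication on the first factor) we have $\iota(g,x)=(ga^{-1})\iota(a,x)$ and $\iota(h,y)=\iota(ga^{-1}b,y)=(ga^{-1})\iota(b,y)$; applying the isometry $ga^{-1}$ to the two inequalities above, and using that the $G$- and $\bbR$-actions on $FS$ commute, turns them into the two displayed inequalities.

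\emph{Induction.} I would then prove by induction on $m\in\{0,\dots,n\}$ that for every $(g,x)$ and every $(h,y)\in S^m_{\Psi,S,k}(g,x)$ there is $\tau\in[-2m\beta,2m\beta]$ with $d_{FS}(\Phi_T\iota(g,x),\Phi_{T+\tau}\iota(h,y))\le 2m\varepsilon$; the case $m=n$ is the assertion, and $m=0$ is clear. For the step choose $(h',y')\in S^{m-1}_{\Psi,S,k}(g,x)$ with $(h,y)\in S^1_{\Psi,S,k}(h',y')$. By the inductive hypothesis there is $\tau'$ with $|\tau'|\le 2(m-1)\beta$ and $d_{FS}(\Phi_T\iota(g,x),\Phi_{T+\tau'}\iota(h',y'))\le 2(m-1)\varepsilon$, and by the one-step claim applied to $(h',y')$ and $(h,y)$ there are $q\in FS$ and $\sigma,\rho\in[-\beta,\beta]$ with $d_{FS}(\Phi_T\iota(h',y'),\Phi_{T+\sigma}(q))\le\delta$ and $d_{FS}(\Phi_T\iota(h,y),\Phi_{T+\rho}(q))\le\delta$. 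Since $|\tau'|\le 2(m-1)\beta\le\alpha$, flowing the first of these by $\tau'$ and using \cref{def:unicont} gives $d_{FS}(\Phi_{T+\tau'}\iota(h',y'),\Phi_{T+\tau'+\sigma}(q))\le\varepsilon$, whence $d_{FS}(\Phi_T\iota(g,x),\Phi_{T+\tau'+\sigma}(q))\le(2m-1)\varepsilon$ by the triangle inequality. Put $\tau\coloneqq\tau'+\sigma-\rho$, so $|\tau|\le 2m\beta\le\alpha$; flowing $d_{FS}(\Phi_T\iota(h,y),\Phi_{T+\rho}(q))\le\delta$ by $\tau$ and using \cref{def:unicont} again (and $\tau+\rho=\tau'+\sigma$) gives $d_{FS}(\Phi_{T+\tau}\iota(h,y),\Phi_{T+\tau'+\sigma}(q))\le\varepsilon$. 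A last triangle inequality yields $d_{FS}(\Phi_T\iota(g,x),\Phi_{T+\tau}\iota(h,y))\le 2m\varepsilon$ with $|\tau|\le 2m\beta$, completing the induction.

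\emph{Main obstacle.} The one real subtlety, which dictates the bookkeeping above, is that the flow is not isometric: a bound $d_{FS}(\cdot,\cdot)\le\varepsilon$ cannot be transported along the flow, whereas a bound $\le\delta$ can, and then only for times of modulus $\le\alpha$, where it degrades back to $\le\varepsilon$. This is why the one-step claim is stated via an auxiliary point $q$ that is $\delta$-close to both $\iota$-images rather than comparing them directly, and why one has to check throughout the induction that the total accrued time shift stays within $[-\alpha,\alpha]$ — which is exactly why $\alpha$ was set to $2n\beta$. The remaining point requiring care is the equivariance bookkeeping in the one-step claim: one must apply \cref{def:sct}\eqref{def:sct5} at the shifted base points $(a,x)$ and $(b,y)$ (with $s=a$, respectively $s=b$), and translate both conclusions by $ga^{-1}$, so that a single point $q$ serves both estimates.
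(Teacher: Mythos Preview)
Your proof is correct and follows essentially the same inductive scheme as the paper's: induct on $m$, at each step peel off one $S^1$-move, use \cref{def:sct}\eqref{def:sct5} twice to get two $\delta$-estimates to a common point, flow them with uniform continuity into $\varepsilon$-estimates, and combine via the triangle inequality, keeping the accumulated time shift in $[-2m\beta,2m\beta]\subseteq[-\alpha,\alpha]$.

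The only difference is presentational. The paper applies \eqref{def:sct5} directly at $(g',x')$ and at $(h,y)$, so that the common comparison point $\iota(g'a^{-1},f(x'))=\iota(hb^{-1},f'(y))$ appears immediately. You instead apply \eqref{def:sct5} at the shifted base points $(a,x)$ and $(b,y)$ (where the first coordinate matches the $F$-index), land at $\iota(e,z)$, and then translate both inequalities by the isometry $ga^{-1}$ to reach the same common point $q=\iota(ga^{-1},z)$. This extra equivariance step is harmless and yields exactly the same two $\delta$-bounds; your formulation has the mild advantage that it makes explicit why the hypothesis $f\in F_a$, $f'\in F_b$ matches the index required in \eqref{def:sct5}. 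Your final time shift $\tau=\tau'+\sigma-\rho$ agrees with the paper's (the paper's displayed $\tau'=\tau+\tau_f+\tau_{f'}$ has a sign slip; the preceding line shows it should be $\tau+\tau_f-\tau_{f'}$, as in your argument).
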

\begin{proof}
We will prove by induction on $m=0,\ldots, n$ that for every $(h,y)\in S^m_{\Psi,S,k}(g,x)$ we can find a $\tau\in [-2m\beta,2m\beta]$ such that
\begin{equation*}d_{FS}(\Phi_T(\iota(g,x)),\Phi_{T+\tau}(\iota(h,y)))\le 2m\varepsilon.\end{equation*}
This is clear for $m=0$. For $(h,y)\in S^{m+1}_{\Psi,S,k}(g,x)$ choose a $(g',x')\in S^m_{\Psi,S,k}(g,x)$ with $(h,y)\in S^1_{\Psi,S,k}(g',x')$. Thus, there are $a,b\in S,f\in F_a(\Psi,S,k),f'\in F_b(\Psi,S,k)$ such that $f(x')=f'(y)$ and $hb^{-1}=g'a^{-1}$.
By induction assumption and \cref{def:sct}~\eqref{def:sct5} there are $\tau\in[-2m\beta,2m\beta],\tau_f,\tau_{f'}\in[-\beta,\beta]$ such that
\begin{eqnarray*}
d_{FS}(\Phi_T(\iota(g,x)),\Phi_{T+\tau}(\iota(g',x')))&\le& 2m\varepsilon,\\
d_{FS}(\Phi_T\iota(g',x'),\Phi_{T+\tau_f}\iota(ga^{-1},f(x')))&\le& \delta,\\
d_{FS}(\Phi_T\iota(h,y),\Phi_{T+\tau_{f'}}\iota(hb^{-1},f'(y)))&\le& \delta.
\end{eqnarray*}
By uniform continuity we get
\begin{eqnarray*}
d_{FS}(\Phi_{T+\tau}\iota(g',x'),\Phi_{T+\tau+\tau_f}\iota(ga^{-1},f(x')))&\le& \varepsilon,\\
d_{FS}(\Phi_{T+\tau+\tau_f-\tau_{f'}}\iota(h,y),\Phi_{T+\tau+\tau_{f}}\iota(hb^{-1},f'(y)))&\le& \varepsilon.
\end{eqnarray*}
Let $\tau'\coloneqq \tau+\tau_f+\tau_{f'}\in[-2(m+1)\beta,2(m+1)\beta]$ and by the triangle inequality we obtain
\[d_{FS}(\Phi_T(\iota(g,x)),\Phi_{T+\tau'}(\iota(h,y)))\le 2m\varepsilon+\varepsilon+\varepsilon=2(m+1)\varepsilon.\qedhere\]
\end{proof}
\cref{lem:5.7} is a generalization of \cite[Theorem~5.7]{flow}. Using this and \cref{lem:5.12} instead of \cite[Lemma~5.12]{flow} as in the proof of \cite[Proposition~5.11]{flow} we obtain the following proposition.

\begin{proposition} Let $X$ be a finite-dimensional, second-countable and locally compact flow space for the group $G$ such that
\begin{enumerate}
\item the flow is uniformly continuous,
\item $X$ admits strong contracting transfers and
\item there are long $\calf$-covers at infinity.
\end{enumerate}
Then $G$ is strongly transfer reducible with respect to the family $\VCyc\cup \calf$.
\end{proposition}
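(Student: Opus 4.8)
The plan is to follow the scheme of the proof of \cite[Proposition~5.11]{flow}, feeding in \cref{lem:5.7} wherever that proof uses a long thin cover of the whole flow space, and \cref{lem:5.12} wherever it controls the displacement of the sets $S^n_{\Psi,S,k}(g,x)$ under the flow. Recall that to prove $G$ strongly transfer reducible over $\VCyc\cup\calf$ one must name a single constant $N$, independent of the data $S,n,k$, and then, for every finite symmetric $S\ni e$ and all $n,k\in\bbN$, produce a compact contractible controlled $N$-dominated metric space, a strong homotopy $G$-action $\Psi$ on it, and an open $\VCyc\cup\calf$-cover of dimension $\le N$ of the product of $G$ with that space, each member of which contains some $S^n_{\Psi,S,k}(g,x)$. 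The space and the strong homotopy action will be exactly those produced by the strong contracting transfers hypothesis; the cover will be the preimage, under the transfer map composed with a suitable flow time $\Phi_T$, of a long thin cover of the flow space supplied by \cref{lem:5.7}, and uniform continuity of the flow will be what lets one push the displacement estimate of \cref{lem:5.12} into this preimage.

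Concretely, let $N_0$ be the constant of \cref{lem:5.7} and $N_1$ the constant of \cref{def:sct}, and put $N\coloneqq\max(N_0,N_1)$; this depends only on $X$, $G$ and $\calf$. Given $S,n,k$, use \cref{def:sct} to obtain $\beta>0$ and set $\alpha\coloneqq 2n\beta$. Apply \cref{lem:5.7} with this $\alpha$ to get $\varepsilon_0>0$ and an open $\VCyc\cup\calf$-cover $\mcW$ of $X$ with $\dim\mcW\le N_0$ such that each $B_{\varepsilon_0}(\Phi_{[-\alpha,\alpha]}z)$ lies in a member of $\mcW$. Let $\delta_0>0$ be a modulus for the flow as in \cref{def:unicont} for the pair $(\alpha,\varepsilon_0/2)$. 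Now apply \cref{lem:5.12} with thinness parameter $\varepsilon\coloneqq\delta_0/(2n)$: unwinding its statement, it selects, via \cref{def:sct}, a time $T>0$, a compact contractible controlled $N_1$-dominated space $Z$, a strong homotopy $G$-action $\Psi$ on $Z$ and a continuous $G$-equivariant map $\iota\colon G\times Z\to X$ such that for every $(g,x)\in G\times Z$ and every $(h,y)\in S^n_{\Psi,S,k}(g,x)$ there is $\tau\in[-\alpha,\alpha]$ with $d_X(\Phi_T\iota(g,x),\Phi_{T+\tau}\iota(h,y))\le 2n\varepsilon=\delta_0$.

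Set $F\coloneqq\Phi_T\circ\iota\colon G\times Z\to X$, which is continuous and $G$-equivariant since the flow commutes with the $G$-action, and define $\calu\coloneqq\{F^{-1}(W)\mid W\in\mcW\}$. This is an open cover of $G\times Z$ with $\dim\calu\le\dim\mcW\le N_0\le N$, and it is a $\VCyc\cup\calf$-collection: if $gF^{-1}(W)\cap F^{-1}(W)\neq\emptyset$ then $gW\cap W\neq\emptyset$ by equivariance, hence $gW=W$ and $gF^{-1}(W)=F^{-1}(W)$, while $G_{F^{-1}(W)}\le G_W\in\VCyc\cup\calf$ and $\VCyc\cup\calf$ is again a family. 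For the engulfing property fix $(g,x)\in G\times Z$, put $z_0\coloneqq\Phi_T\iota(g,x)$, and pick $W_0\in\mcW$ with $B_{\varepsilon_0}(\Phi_{[-\alpha,\alpha]}z_0)\subseteq W_0$. Given $(h,y)\in S^n_{\Psi,S,k}(g,x)$, choose $\tau\in[-\alpha,\alpha]$ as above; then $d_X(z_0,\Phi_{T+\tau}\iota(h,y))\le\delta_0$, so applying $\Phi_{-\tau}$ and invoking uniform continuity with $|-\tau|\le\alpha$ yields $d_X(\Phi_{-\tau}z_0,\Phi_T\iota(h,y))\le\varepsilon_0/2<\varepsilon_0$; since $\Phi_{-\tau}z_0\in\Phi_{[-\alpha,\alpha]}z_0$ this gives $F(h,y)=\Phi_T\iota(h,y)\in B_{\varepsilon_0}(\Phi_{[-\alpha,\alpha]}z_0)\subseteq W_0$. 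Hence $S^n_{\Psi,S,k}(g,x)\subseteq F^{-1}(W_0)\in\calu$, and together with $Z$ and $\Psi$ this exhibits $G$ as strongly transfer reducible over $\VCyc\cup\calf$.

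All of the substance already resides in \cref{lem:5.7} and \cref{lem:5.12}; what is left is bookkeeping. The one point to watch is the dependency chain of the constants: the thinness $\varepsilon$ fed into \cref{lem:5.12} must be chosen small relative to the uniform-continuity modulus $\delta_0$ attached to the thinness $\varepsilon_0$ emerging from \cref{lem:5.7}, while $\beta$ and $\alpha$ and the two dimension bounds $N_0,N_1$ must stay independent of $(S,n,k)$. Beyond that the argument is the routine check, carried out above, that equivariant preimage of a cover preserves openness, the dimension bound and the $\VCyc\cup\calf$-subset condition. I do not expect any genuine obstacle.
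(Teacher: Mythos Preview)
Your proposal is correct and follows exactly the route the paper indicates: the paper's own ``proof'' is just the sentence that one should rerun the proof of \cite[Proposition~5.11]{flow} with \cref{lem:5.7} in place of \cite[Theorem~5.7]{flow} and \cref{lem:5.12} in place of \cite[Lemma~5.12]{flow}, and you have simply written out that rerun in detail. One small slip in your closing paragraph: you say ``$\beta$ and $\alpha$ \dots\ must stay independent of $(S,n,k)$'', but in fact $\beta=\beta(S,k)$ and $\alpha=2n\beta$ do depend on $(S,n,k)$; what matters, and what your argument actually uses, is only that $N_0$ and $N_1$ are independent of $(S,n,k)$, which they are by the statements of \cref{lem:5.7} and \cref{def:sct}.
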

\begin{cor} If $X$ is a cocompact, finite-dimensional flow space for the group $G$, which admits strong contracting transfers, then $G$ is strongly transfer reducible with respect to the family $\VCyc$, in particular $G$ satisfies \FJCw{}.
\end{cor}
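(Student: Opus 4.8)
The plan is to obtain the corollary from the preceding Proposition by verifying that a cocompact, finite-dimensional flow space automatically satisfies all three of its hypotheses for the trivial family $\calf=\{1\}$, so that the conclusion becomes strong transfer reducibility over $\VCyc\cup\{1\}=\VCyc$. By definition the $G$-action on a flow space is isometric and proper, so cocompactness together with \cref{prop:sep} shows that $X$ is second-countable and locally compact, which are precisely the standing hypotheses of the Proposition. Finite-dimensionality and the existence of strong contracting transfers are assumed. Hence only the uniform continuity of the flow and the existence of long $\calf$-covers at infinity remain to be checked.

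For uniform continuity, fix $\alpha,\varepsilon>0$, choose a compact set $K\subseteq X$ with $GK=X$, and, using local compactness of $X$, pick a compact set $K'$ containing some ball neighborhood $B_{\delta_0}(K)$ of $K$ with $\delta_0>0$. The flow map $(z,t)\mapsto\Phi_t z$ is continuous, hence uniformly continuous on the compact set $K'\times[-\alpha,\alpha]$, so there is $\delta\in(0,\delta_0]$ with $d(\Phi_tz,\Phi_tz')\le\varepsilon$ whenever $z,z'\in K'$, $d(z,z')\le\delta$ and $t\in[-\alpha,\alpha]$. Given arbitrary $z,z'\in X$ with $d(z,z')\le\delta$, write $z=gw$ with $w\in K$; since the $G$-action is isometric, $d(g^{-1}z,g^{-1}z')\le\delta\le\delta_0$, so $g^{-1}z=w\in K\subseteq K'$ and $g^{-1}z'\in B_{\delta_0}(K)\subseteq K'$. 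As the flow is $G$-equivariant, $\Phi_t(gx)=g\Phi_tx$ (the two factors of $G\times\bbR$ commute), and $G$ acts isometrically, we get $d(\Phi_tz,\Phi_tz')=d(\Phi_t(g^{-1}z),\Phi_t(g^{-1}z'))\le\varepsilon$ for all $t\in[-\alpha,\alpha]$. Thus the flow is uniformly continuous.

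The existence of long $\calf$-covers at infinity is vacuous in the cocompact case: in \cref{def:longinf} take the compact subset to be any $K$ with $GK=X$, so that $X\setminus GK=\emptyset$; then for every $\alpha>0$ the empty $\calf$-collection, together with $M\coloneqq 1$ and any $\varepsilon>0$, satisfies the (empty) requirement, and this works already for $\calf=\{1\}$. All hypotheses of the Proposition now hold with $\calf=\{1\}$, so $G$ is strongly transfer reducible over $\VCyc$. Finally, ``in particular $G$ satisfies \FJCw{}'' is the implication recorded just before \cref{def:unicont}: every virtually cyclic group is CAT(0) and hence satisfies \FJCw{} by \cite[Example~2.16(i)]{wegner2013farrell}, so strong transfer reducibility over $\VCyc$ yields \FJCw{} by \cite[Proposition~2.20]{wegner2013farrell}. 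The only point that is not a citation or a triviality is the uniform continuity of the flow, and that is a short compactness-plus-equivariance argument; so I expect no real obstacle.
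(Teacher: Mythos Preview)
Your proof is correct and follows essentially the same route as the paper's own argument: invoke \cref{prop:sep} for local compactness and second-countability, take $\calv=\emptyset$ and a compact $K$ with $GK=X$ to make the long-covers-at-infinity condition vacuous, and note that cocompactness forces uniform continuity of the flow. The paper merely asserts the last point without proof, so your compactness-plus-equivariance argument is a welcome elaboration rather than a deviation.
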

\begin{proof} By \cref{prop:sep}, $X$ is locally compact and second-countable. Cocompactness implies uniform continuity and 
choosing $\calv = \emptyset$ and $K$ as a compact subset of $X$ with $GK=X$ shows that $X$ admits long covers at infinity.
Then $G$ satisfies the Farrell--Jones conjecture by \cite[Proposition~2.20]{wegner2013farrell}.
\end{proof}

\bibliographystyle{amsalpha}
\bibliography{covers}
\end{document}